\documentclass[12pt]{article}
\usepackage{caption}
\usepackage{url}
\usepackage{color}
\usepackage{amssymb}
\usepackage{amsmath}
\usepackage{amsthm}
\usepackage{graphicx}
\usepackage{mathscinet}

\newtheorem{theorem}{Theorem}[section]
\newtheorem{lemma}[theorem]{Lemma}

\newtheorem{prop}[theorem]{Proposition}
\newtheorem{remark}[theorem]{Remark}
\newtheorem{counterex}[theorem]{Counterexample}

\title{Resolving the Kohayakawa–-Kreuter Conjecture for Families}
\author{Matthew P. Yancey \thanks{Institute for Defense Analyses / Center for Computing Sciences (IDA / CCS), mpyance@super.org}}

\begin{document}
\maketitle

\begin{abstract}
A graph $G$ is $(a,b)$-sparse if every nonempty subgraph $H$ satisfies $e(H) \leq a v(H) + b$.
We are interested in the conditions under which an $(a,b)$-sparse graph can be partitioned $E(G) = E(G_1) \cup E(G_2)$ such that for $i \in \{1,2\}$ we have that $G_i$ is $(a_i, b_i)$-sparse.
Kuperwasser, Samotij, and Wigderson conjectured that a $(m,0)$-sparse graph can be partitioned into a $(1,-1)$-sparse graph and a $(m,1-2m)$-sparse graph.
We prove the conjecture in full.

The Kohayakawa--Kreuter Conjecture for Families claims that $n^{-1/m_2}$ is the threshold function for the random graph being Ramsey a.a.s. for graph families $\mathcal{H}_1, \ldots , \mathcal{H}_r$.
Kuperwasser, Samotij, and Wigderson motivated their conjecture by proving that it is sufficient to establish the Kohayakawa--Kreuter Conjecture for Families.
\end{abstract}

\section{Introduction}

In this paper we deal with simple graphs.
For a graph $G$, let $v(G)$ and $e(G)$ denote the number of vertices and edges it contains, respectively.
For numbers $a \in \mathbb{R}_+, b \in \mathbb{R}$, we say that a graph $G$ is $(a,b)$-sparse if every nonempty subgraph $H$ satisfies $e(H) \leq a v(H) + b$.
As examples, the family of forests is characterized as the $(1,-1)$-sparse graphs, all planar graphs are $(3,-6)$-sparse, while a $K_{3,3}$ with it's edges subdivided an arbitrary number of times is $(1,3)$-sparse.
For background on the applications of sparsity\footnote{They use slightly different notation, where $(a,b)$-sparse means $e(H) \leq a v(H) - b$, while our notation is consistent with \cite{ChoiKostochkaYancey,KostochkaXuZhu}.}, see the introduction by Lee and Streinu \cite{LEE20081425}, with additional references given in the more recent \cite{CY2020} (see Section 2.3).
The case $2a + b < 1$ is considered pathological, as only the empty graph can satisfy the condition.
We are interested in the question on when an $(a,b)$-sparse graph $G$ has a partition $E(G) = E(G_1) \cup E(G_2)$ such that $G_i$ is $(a_i, b_i)$-sparse for $i \in \{0,1\}$.

Clearly, if $G$ partitions $E(G) = E(G_1) \cup E(G_2)$ such that $G_i$ is $(a_i, b_i)$-sparse, then $G$ is $(a_1 + a_2, b_1 + b_2)$-sparse. 
In some cases this necessary condition is sufficient, which can be seen through a connection with matroids.
For a graph $G$ and fixed $a,b$, let $\mathcal{I}_{a,b}$ denote the set of subgraphs of $G$ that are $(a,b)$-sparse. 
Lorea \cite{LOREA1979} proved for integral $a,b$ with $-2a \leq b \leq 0$ that $(E(G), \mathcal{I}_{a,b})$ forms a matroid.
This is generalized to include positive $b$ by White and Whiteley (the statement with proof is Proposition A.1.1 in \cite{whiteley1996some}, who attribute it to \cite{whiteleyWhite}).
Let $\mathcal{A} + \mathcal{B} = \{G_a \cup G_b : G_a \in \mathcal{A}, G_b \in \mathcal{B}\}$.
Pym and Perfect \cite{PYM19701} used submodular functions to compare $\mathcal{I}_{a_1 + a_2,b_1 + b_2}$ against $\mathcal{I}_{a_1, b_1} + \mathcal{I}_{a_2,b_2}$ (observe that they will be equal if and only if the necessary condition is sufficient), but the conclusions are opaque\footnote{To illustrate the inscrutable nature: Example A.2.4 of \cite{whiteley1996some} uses Pym an Perfect's work to conclude that $\mathcal{I}_{2,0} = \mathcal{I}_{1,1} + \mathcal{I}_{1,-1}$, which we prove is not true in Counterexample \ref{disconnected counterexample}.}.
Our first contribution is an explicit class of examples where the necessary condition is sufficient, and in Section \ref{sec: partitioning} we provide a variety of constructions showing that none of the assumptions can be expanded without additional restrictions.

\begin{theorem}\label{partitioning theorem}
Let $a_1,a_2,b_1,b_2$ be integral, with $a_1,a_2$ positive.
Every $(a_1 + a_2, b_1 + b_2)$-sparse graph can be partitioned $E(G) = E(G_1) \cup E(G_2)$ such that $G_1$ is $(a_1, b_1)$-sparse and $G_2$ is $(a_2, b_2)$-sparse if 
\begin{enumerate}
	\item[(A)] $-a_1 \leq b_1 \leq 0$ and $-a_2 \leq b_2 \leq 0$, or
	\item[(B)] $b_1 \geq 0$ and $b_2 \geq 0$.
\end{enumerate}
\end{theorem}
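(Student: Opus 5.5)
The plan is to use the matroid union theorem. In both cases (A) and (B), the families $\mathcal{I}_{a_i,b_i}$ are the independent sets of a matroid on $E(G)$. In case (A) this follows from Lorea's result, since $-a_i\le b_i\le 0$ lies inside $-2a_i\le b_i\le 0$. In case (B) it follows from the White--Whiteley result. The rank function of the $(a,b)$-sparsity matroid is
\[
r_{a,b}(F)=\min \sum_j \min\{|F_j|,\, a\,v(F_j)+b\},
\]
where the outer minimum runs over partitions $F=F_1\cup\cdots\cup F_k$. This is the standard Dilworth truncation formula for the submodular function $f(F)=a\,v(F)+b$ on nonempty edge sets. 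By Nash-Williams/Edmonds, $G$ splits as required if and only if, for every $F\subseteq E(G)$,
\[
|F| \le r_{a_1,b_1}(F) + r_{a_2,b_2}(E(G)\setminus F \cap \cdot)\ \text{— more precisely}\ |E(G)|\le \min_{F}\big(r_1(F)+r_2(F)+|E(G)\setminus F|\big).
\]
So I must show $r_1(F)+r_2(F)\ge |F|$ for every $F$. Taking $F=E(G)$ suffices, since every subgraph of $G$ is again $(a_1+a_2,b_1+b_2)$-sparse. Alternatively, I can argue directly by induction: take a minimal counterexample and apply matroid union to reduce to a single deficient set.

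I then fix partitions $\mathcal P_1$ and $\mathcal P_2$ of $E=E(G)$ attaining $r_1(E)$ and $r_2(E)$. Parts where $|F_j|$ is the smaller term can be absorbed, so I may assume $r_1(E)=\sum_{P\in\mathcal P_1'} (a_1 v(P)+b_1)+|E\setminus \bigcup\mathcal P_1'|$, and similarly for $r_2$. The goal is the inequality
\[
\sum_{P\in\mathcal P_1'}(a_1v(P)+b_1)+\sum_{Q\in\mathcal P_2'}(a_2v(Q)+b_2)+|E\setminus\textstyle\bigcup\mathcal P_1'|+|E\setminus \bigcup\mathcal P_2'| \ge |E|.
\]
Consider the common refinement, that is, the edge sets $P\cap Q$, and group these into connected pieces. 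Each piece $H$ satisfies $e(H)\le (a_1+a_2)v(H)+b_1+b_2$. Summing over pieces gives a bound on $|E|$ by $\sum_H\big((a_1+a_2)v(H)+b_1+b_2\big)$. It then remains to compare this with the left-hand side, part by part.

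Case (B) is the easier one. Here $b_i\ge 0$, so the additive constants only help. I would like to take the uncrossing to the extreme and use the coarsest partition, a single part $E$ or its connected components, which gives $a_1v+b_1+a_2v+b_2\ge e$. The real difficulty is that the optimal partitions for the two matroids differ, so I cannot simply add. The tool for this is uncrossing: if parts $P\in\mathcal P_1$ and $Q\in\mathcal P_2$ overlap, the inequality to verify is submodular in nature. Because $v(P\cup Q)+v(P\cap Q)\le v(P)+v(Q)$, I can replace the pair of partitions by a common one without increasing the left-hand side. The uncrossing step is that merging overlapping parts within one partition costs at most the constant $b_i$ per merge. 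Each merge also replaces a shared vertex set, which saves at least $a_i\ge 1 > -b_i$ in case (A). In case (B), instead, splitting costs nothing negative.

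This uncrossing is the main obstacle, and it is exactly where the hypotheses $-a_i\le b_i$ (in case (A)) and $b_i\ge 0$ (in case (B)) must enter. In case (A), when a part of $\mathcal P_1$ meets several parts of $\mathcal P_2$ in shared vertices, I will argue by induction on the number of crossing pairs. Replacing $P$ and $Q$ by $P\cup Q$ in both partitions changes the $\mathcal P_1$ total by at least $-a_1|V(P)\cap V(Q)|\cdot 0+\ldots$, and I will need the bound $b_i\ge -a_i$ to pay for one lost constant per shared vertex. Once both partitions coincide, as $\mathcal P$, the inequality reduces to the sparsity of $G$ on each part $P\in\mathcal P$, since $e(P)\le (a_1+a_2)v(P)+b_1+b_2$, and this finishes the proof.
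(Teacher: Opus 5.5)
Your setup is sound: the Dilworth-truncation formula for $r_{a,b}$ is correct in both cases, and it suffices to prove $r_1(E)+r_2(E)\ge |E|$. The gap is the step that carries all the content: ``replace the pair of partitions by a common one without increasing the left-hand side.'' You never carry it out. The increment $-a_1|V(P)\cap V(Q)|\cdot 0+\ldots$ is a placeholder, and putting $P\cup Q$ into both partitions does not give a partition of $E$, since $Q\setminus P$ lies in other parts of $\mathcal{P}_1$. As stated, the step is also false. Since $\mathcal{P}_1,\mathcal{P}_2$ already attain the minima $r_1(E),r_2(E)$, a common choice that does not increase your left-hand side would have to be optimal for both rank formulas at once, and none need exist. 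Take $(a_1,b_1)=(1,0)$, $(a_2,b_2)=(2,-2)$, and let $G$ be two copies of $K_5$ joined by a path with $t\ge 1$ edges. Then $G$ is $(3,-2)$-sparse, $r_1(E)=v(G)=9+t$ and $r_2(E)=8+8+t$, so $r_1(E)+r_2(E)=25+2t$. With $\mathcal{P}_1'=\mathcal{P}_2'=\mathcal{P}'$, your left-hand side equals $\sum_{P\in\mathcal{P}'}(3v(P)-2)+2|E\setminus\bigcup\mathcal{P}'|$. Its minimum is $\min\{25+3t,\,26+2t\}$ (all of $E$ as one part, or the two $K_5$'s as parts with the path free), which exceeds $25+2t$. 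So the two partitions cannot be made to coincide.

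The missing idea is that the free edges, not a common partition, absorb the mismatch. Write $S_i=\bigcup\mathcal{P}_i'$.

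In case (A), merging two non-free parts that share a vertex changes $\sum f_i$ by $-a_i|V(P)\cap V(P')|-b_i\le 0$; this is exactly where $b_i\ge -a_i$ is needed. Splitting a part with $c$ components into those components changes it by $b_i(c-1)\le 0$. Hence $r_i(E)=|E\setminus S_i|+a_iv(S_i)+b_ic(S_i)$, with $c(\cdot)$ counting components. The map $S\mapsto a_iv(S)+b_ic(S)$ is nondecreasing as edges are added, since the increments are $2a_i+b_i$, $a_i$, $-b_i$ or $0$. Set $S=S_1\cap S_2$ and use $|E\setminus S_1|+|E\setminus S_2|\ge |E\setminus S|$. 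Summing over the components $C$ of $S$ and applying sparsity to each gives $r_1(E)+r_2(E)\ge |E\setminus S|+\sum_C\big((a_1+a_2)v(C)+b_1+b_2\big)\ge |E|$.

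In case (B), merging two non-free parts never increases $\sum f_i$, so each $\mathcal{P}_i'$ can be taken to be the single part $S_i$. Then use monotonicity of $a_iv(S)+b_i$ (with value $0$ on $\emptyset$) and apply sparsity to $S_1\cap S_2$ as a whole. Refining into connected pieces, as you suggest, would cost an extra $b_1+b_2\ge 0$ per piece.

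The paper argues differently: it takes a minimal deficient edge set, shows it is connected, and uses vertex-disjointness of maximal $(a_1,b_1)$-tight subgraphs of a basis. Either way, some such comparison argument is indispensable, and your proposal does not supply one.
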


Theorem \ref{partitioning theorem} generalizes results like the Nash-Williams Theorem \cite{NashWilliams}, which says that a $(k,-k)$-sparse graph can be partitioned into $k$ forests; or a special case of a theorem by Hakimi \cite{HAKIMI1965290}, which says that a $(k,0)$-sparse graph can be partitioned into $k$ pseduoforests, where a pseudoforest is a $(1,0)$-sparse graph.

The question gets messier when we consider nonintegral values.
The family of $(\frac{d}{d+1}, 0)$-sparse graphs corresponds to forests where every component contains at most $d$ edges; in particular matchings are $(0.5, 0)$-sparse.
Matchings are a well-known example of something that is not a matroid.
Moreover, an odd cycle $C_{2k+1}$ is $(1,0)$-sparse, but for any partition $C_{2k+1} = M \cup G_2$ where $M$ is a matching, $G_2$ is not $(a_2, 0)$-sparse unless $a_2 \geq 2/3$.
Constructions of sharpness for the Nine Dragon Tree Conjecture \cite{MONTASSIER201238} give examples of graphs $G_{k,d}$ that are $(k + \frac{d}{k+d+1}, d+1- (k + \frac{d}{k+d+1}))$-sparse with $k,d \in \mathbb{Z}$, but any partition $G_{k,d} = T_k \cup F_d$ where $T_k$ is $(k,-k)$-sparse results in a graph $F_d$ whose maximum degree is at least $d+1$, and therefore not $(a_2,0)$-sparse unless $a_2 \geq \frac{d}{d+1}$.
Fan, Li, Song, and Yang \cite{FAN201572} modified this construction to produce $(k + \frac{d}{d+k+1},1)$-sparse graphs $H_{k,d}$ such that any partition $H_{k,d} = P_k \cup F_d$ where $P_k$ is $(k, 0)$-sparse results in a graph $F_d$ whose maximum degree is at least $d+1$.
Thus, by setting $k \gg d \gg 1$, we see that we can get as far away from the matroid case as possible: for arbitrary $\epsilon>0$ there exists an integral $a$ and a graph that is $(a+\epsilon,0)$-sparse and can not be partitioned into a $(a,0)$-sparse subgraph and a $(1-\epsilon,0)$-sparse subgraph.

Although it is messier, some things can be proven about the nonintegral case.
Inspired by the Nash-Williams theorem, the Strong Nine Dragon Tree Conjecture by Montassier, Ossona de Mendez, Raspaud, and Zhu \cite{MONTASSIER201238} asks if a $(k + \frac{d}{k+1+d}, -(k + \frac{d}{k+1+d}))$-sparse graph can be partitioned into a $(k,-k)$-sparse graph and a $(\frac{d}{d+1},0)$-sparse graph.
Montassier, Ossona de Mendez, Raspaud, and Zhu \cite{MONTASSIER201238} proved this case when $(d,k)=(1,1)$, Yang \cite{YANG201840} proved it for the remaining cases when $d=1$, the case $(k,d) = (1,2)$ was proven by Kim, Kostochka, West, Wu, and Zhu \cite{Kim2021}, and Mies and Moore \cite{mies2023strong} proved the $d \leq k+1$ case.
The Strong Nine Dragon Tree Conjecture remains open in general.
Chen, Kim, Kostochka, West and Zhu \cite{CHEN2017741} created the Overfull Nine Dragon Tree Conjecture, which contains a special case that asks if a graph that is both $(k + \frac{1}{k+2}, -k + \frac{2k}{k+2})$-sparse and $(k+1, -(k+1))$-sparse, then it can be partitioned into a $(k, -k)$-sparse and a $(1/2, 0)$-sparse graph.
The Overfull Nine Dragon Tree Conjecture was proved by Mies and Moore \cite{IGT_2024__1__21_0}.
Mies, Moore, and Smith-Roberge \cite{MIES2025104214} established a ``Pseudoforest Strong Nine Dragon Tree Theorem,'' in that a $(k + \frac{d}{d+k+1},0)$-sparse graph can be partitioned $P_k \cup F_d$ such that $P_k$ is $(k,0)$-sparse and $F_d$ is $(\frac{d}{d+1},0)$-sparse. 


This paper's primary focus is a conjecture by Kuperwasser, Samotij, and Wigderson \cite{KUPERWASSER_SAMOTIJ_WIGDERSON_2025}: that a $(m, 0)$-sparse graph $G$ can be partitioned $E(G) = E(F) \cup E(G')$ such that $F$ is $(1,-1)$-sparse and $G'$ is $(m, 1-2m)$-sparse.
A graph $G$ is strictly $(a,b)$-sparse if every subgraph $H$ satisfies $e(H) < a v(H) + b$.
As partial progress, Christoph, Martinsson, Steiner, and Wigderson \cite{Christoph} proved that we can have (A) $F$ is $(1,0)$-sparse and $G'$ is $(m, 1-2m)$-sparse, or (B) $F$ is $(1,-1)$-sparse and $G'$ is strictly $(m, -4m/3)$-sparse when $m > 1.5$.
Our main result is a full proof of the conjecture by Kuperwasser, Samotij, and Wigderson, which we present in Theorem \ref{main theorem}

\begin{theorem}\label{main theorem}
Let $m > 1$.
If $G$ is $(m, 0)$-sparse, then there exists a partition $G = F \cup G'$ such that $F$ is $(1,-1)$-sparse and $G'$ is $(m, 1-2m)$-sparse.
\end{theorem}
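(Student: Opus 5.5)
We argue by a minimal counterexample, choosing for a putative counterexample $G$ the partition $F \cup G'$ that is extremal for a potential function. Let $G$ be $(m,0)$-sparse with $v(G)+e(G)$ minimal, and suppose no valid partition exists. Among all partitions $E(G)=E(F)\cup E(G')$ with $F$ a forest, we pick one minimizing a lexicographic measure of the failure of $G'$. For $H \subseteq G'$ define the potential
\[
\rho(H) = m\,v(H) + 1 - 2m - e(H).
\]
The measure we minimize is first the minimum of $\rho$ over subgraphs of $G'$, then the number of minimizers. Since $G'$ fails to be $(m,1-2m)$-sparse, some $H\subseteq G'$ has $\rho(H)<0$; we call such $H$ \emph{tight}. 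Minimality of $G$ gives connectivity and the absence of low-degree vertices. Concretely, a vertex $v$ of degree at most $\lfloor m\rfloor$ could be deleted, the smaller graph partitioned, and the edges at $v$ placed in $G'$ (or one of them in $F$). This needs a short check that $\rho$ of any subgraph through $v$ drops by at most $\deg(v)-m \le 0$.

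The structural core is to analyse the tight subgraphs. First we show that $\rho$ is submodular on subgraphs of $G'$: $\rho(H_1\cup H_2)+\rho(H_1\cap H_2)\le \rho(H_1)+\rho(H_2)$, with a correction of $1-2m$ when $H_1\cap H_2$ is empty. Hence two intersecting tight subgraphs have a tight union, because $\rho(H_1\cap H_2)\ge 1-m>$ the relevant threshold. This uses $(m,0)$-sparsity of $G$ for small intersections, namely one vertex or one edge, and is where $m>1$ enters. So there is a unique maximal tight subgraph $T$ in each component of the tight family. Next we exchange edges. For an edge $f\in E(T)$ and an edge $e\in E(F)$ such that $F-e+f$ is still a forest (these exist when $e$ lies on the $F$-cycle that $f$ would close), swapping $e$ and $f$ must not improve the measure. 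Therefore $e$ must land inside a tight subgraph, and its endpoints are in $V(T)$. Iterating this over the fundamental cycles, we show that $F$ restricted to $V(T)$ spans $V(T)$ as a tree, or else some exchange strictly raises the minimum potential. Indeed, if $F[V(T)]$ is disconnected, an edge of $T$ joining two $F$-components can be moved into $F$ at no cost, and this drops $e(T)$.

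Once we know that $F[V(T)]$ is a spanning tree with $v(T)-1$ edges, we count edges. The subgraph $G[V(T)]$ contains $e(T)+v(T)-1$ edges, and tightness gives $e(T) > m\,v(T)+1-2m$, so
\[
e(G[V(T)]) > (m+1)v(T) - 2m .
\]
Combined with $e(G[V(T)])\le m\,v(T)$, this forces $v(T)<2m$. Then $T$ is a small graph with more than $m\,v(T)+1-2m$ edges plus a spanning tree, that is, more than $(m+1)(v(T)-1)-m+1$ edges among fewer than $2m$ vertices, which contradicts simplicity, $e\le \binom{v}{2}$, once we optimise over $v$. This step needs care, because $\binom v2$ versus $(m+1)v-2m$ is not automatically contradictory for all $v<2m$. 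For example, $v=2m-1$ gives $\binom{2m-1}{2}=(m-\tfrac12)(2m-1)$ against $(m+1)(2m-1)-2m$. So we will additionally need the $(m,0)$-sparsity of $G$ on $G[V(T)]$ together with degree conditions from minimality, or a refined choice of $F$ that also uses non-tight edges of $G'$.

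The main obstacle is the exchange argument when $F[V(T)]$ is already connected but the global tree $F$ routes through vertices outside $T$. There the swaps interact with several tight subgraphs at once, and the lexicographic measure must be shown to strictly decrease. We would first handle this with an augmenting-path (matroid-union-style) argument: we search for an alternating sequence of swaps from an edge of $T$ to an $F$-edge leaving all tight sets. We would use the submodularity lemma to show that, if no such sequence exists, the closure of reachable vertices forms a tight set $S$ with $F[S]$ connected. The counting above applied to $S$ should then contradict $(m,0)$-sparsity of $G$.
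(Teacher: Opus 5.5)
Your proposal depends on one claim: in an extremal partition, $F[V(T)]$ is a spanning tree of every tight $T$. That claim is not proved, and the tools you invoke for it do not work. Once it is known, your count does finish the proof. Your worry there comes from an arithmetic slip: $\binom{2m-1}{2}=(2m-1)(m-1)$, and $\binom{v}{2}-(v-1)-(mv+1-2m)=\tfrac12\bigl(v^2-(2m+3)v+4m\bigr)<0$ for $2m-1<v<2m$.

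\begin{itemize}
\item \emph{The merging lemma is false.} A single vertex has $\rho=1-m<0$. Two tight subgraphs sharing one vertex therefore have $\rho(H_1\cup H_2)=\rho(H_1)+\rho(H_2)+m-1$, which is nonnegative whenever $\rho(H_1)+\rho(H_2)\ge 1-m$. Merging needs $\rho(H_1\cap H_2)\ge 0$, and $(m,0)$-sparsity of $G$ says nothing about this. So there is no canonical maximal tight subgraph. The closure argument in your last paragraph, which is the proof of the matroid union theorem, then has nothing to run on, since $(m,1-2m)$-sparse edge sets do not form a matroid.
\item \emph{The single swap is not controlled.} After $F-e+f$, $G'+e-f$, every $W$ containing $e$ but not $f$ loses one unit of potential. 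Ruling out a new minimizer via submodularity needs a lower bound on $\rho(W\cap T)$, and that bound fails when $W\cap T$ is a single vertex.
\item \emph{The hard case is left open.} Your ``Indeed'' sentence covers only an edge of $T$ whose ends lie in different components of the whole forest $F$. The case where they are joined by an $F$-path leaving $V(T)$ is exactly what has to be shown.
\end{itemize}

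This is not a technicality. Counterexample~\ref{ring example} (hypothesis $(a+1,-a-2)$, target $(a,-a-1)$) has every ingredient of your sketch: forest exchanges, submodularity of the target potential, lexicographic extremality, and a final count that would contradict the hypothesis on $G$ if $F$ spanned a tight block. Yet no partition exists there, because the tight blocks meet in single vertices around a cycle and no forest spans them all. So any proof of your spanning claim must use $(m,0)$-sparsity of $G$ inside the exchange step, and you have not identified how.

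The paper avoids exchanging against the non-matroidal target altogether:
\begin{itemize}
\item Lemma~\ref{lemma defining f} turns $(m,0)$-sparsity into $(k+1,-f(k,\epsilon))$-sparsity.
\item Matroid union gives a forest plus a $(k,1-f(k,\epsilon))$-sparse graph. This is Theorem~\ref{partitioning theorem}, or Theorem~\ref{problematic partitioning} when $f(k,\epsilon)>k+1$, where $(k+\epsilon,0)$-sparsity enters the rank computation.
\item A case analysis in $\epsilon$ shows this integral family lies inside $(m,1-2m)$-sparsity.
\end{itemize}
The only exchanges it needs are local: Lemma~\ref{forest and sparse pseudoforest} for $1.8\le m<2$, and Lemma~\ref{brooks-like decomposition} for $\epsilon\ge (k+4)/(2k+3)$. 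In the latter the bad sets have exactly $2k+1$ vertices, so every proper nonempty piece has potential at least $k$. That is exactly the lower bound on $\rho(W\cap T)$ your general swap lacks.
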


Let us give an intuitive outline for our approach; the following formulas are not correct and should be treated as back-of-the-envelope-approximations.
The key idea is that every simple graph of order at most $2n+1$ is $(n,0)$-sparse, which implies that for $\delta>0$ a $(a, b)$-sparse graph is also $(a + \delta, b - 2a\delta)$-sparse.
Throughout this paper let $k$ be integral and $0 \leq \epsilon < 1$ such that $k + \epsilon = m$ (e.g. $k = \lfloor m \rfloor$ and $\epsilon = m - k$).
The proof then follows three steps.
\begin{enumerate}
	\item Use the above idea to argue that a $(k + \epsilon, 0)$-sparse graph is also $(k+1, - 2m(1-\epsilon))$-sparse.
	\item Apply Theorem \ref{partitioning theorem} to step 1 to say that $G$ can be partitioned into a forest $F$ and a $(k, 1-2m(1-\epsilon))$-sparse graph $H$.
	\item Use the above idea a second time to say that $H$ is $(k+\epsilon, 1 - 2m(1-\epsilon) - 2m\epsilon)$-sparse.
\end{enumerate}
This approach will work without additional ideas when $k \geq 4$ and $1/2 \leq \epsilon \leq 3/4$.

The proof does require additional ideas to fix two problems, which we now quickly describe.
The first problem is that when $\epsilon$ is less than a half, step (2) may fail because the sparsity coefficients $(k, 1-2m(1-\epsilon))$ do not satisfy the conditions of Theorem \ref{partitioning theorem}.
In this situation, we observe that being $(k+\epsilon, 0)$-sparse is sufficiently stronger that we can find the desired partition $G \cup F$ directly, even when such partitions do not exist for $(k, 1-2m(1-\epsilon))$-sparse graphs. 
This is done in Theorem \ref{problematic partitioning}.
The second problem is that the most straight-forward approach to step (3) only proves that $H$ is $(m,4\epsilon-2-2m)$-sparse, which is insufficient when $\epsilon > 3/4$.
The solution here is that we can remove edges from the least-sparse regions of $H$ by swapping them with edges in $F$ in an operation similar to a matroid base exchange.
This idea is presented in Lemmas \ref{forest and sparse pseudoforest} and \ref{brooks-like decomposition}.

The outline of this paper is as follows.
The conjecture by Kuperwasser, Samotij, and Wigderson was motivated by an application in Ramsey theory for random graphs; we discuss that application in Section \ref{prob ramsey sec}.
We consider partitioning a graph by matroids in Section \ref{sec: partitioning}.
Theorem \ref{main theorem} for when $1 < m < 2$ is proven in Lemma \ref{small m theorem}; the arguments leading up to and including that result are contained to Section \ref{sec: small m}.
Theorem \ref{main theorem} for $m\geq 2$ is proven in Lemma \ref{main theorem for large m}; the arguments leading up to and including that result are contained in Section \ref{large m section}.

\subsection{Kohayakawa--Kreuter Conjecture} \label{prob ramsey sec}

A family of graphs $\mathcal{F}$ is a monotone property if $H \in \mathcal{F}$ and $H \leq G$ implies that $G \in \mathcal{F}$.
Let $G(n,p)$ denote the random graph on $n$ vertices with each edge occurring independently with probability $p$.
Let a.a.s. stand for ``asymptotically almost surely.''
Bollob{\'a}s and Thomason \cite{Bollobs1987ThresholdF} showed that every nontrivial monotone property $\mathcal{F}$ has a threshold function $t(n)$, where for any $q_0(n) \ll t(n) \ll q_1(n)$ we have that 
$G(n, q_0(n)) \notin \mathcal{F}$ and $G(n, q_1(n)) \in \mathcal{F}$ a.a.s.
The thresholds (conjectured or proven) we will discuss only require $T_{\mathcal{F}} q_0(n) < t(n) < q_1(n)/T_{\mathcal{F}}$ for a sufficiently large $T_{\mathcal{F}}$.

We say that a graph $G$ is Ramsey for a tuple of graphs $(H_1, H_2, \ldots, H_r)$ with $r \geq 2$ if for every partition $E(G) = E(G_1) \cup E(G_2) \cup \cdots \cup E(G_r)$ there exists an $i$ such that $H_i \leq G_i$.
Let $R_{H_1, \ldots, H_r}$ denote the set of graphs that are Ramsey for $(H_1, H_2, \ldots, H_r)$, which is a monotone property.
Let $m_2(G) = \max\{ \frac{|E(J)|-1}{|V(J)|-2} : J \leq G, |V(J)| \geq 3\}$.
Simple arithmetic gives that $m_2(G) = \min\{m : G \mbox{ is } (m, 1-2m)\mbox{-sparse}\}$.
R\"{o}dl and Ruci\'{n}ski \cite{rodl1995threshold} proved that for fixed $H$ with\footnote{If $m_2(H) \leq 1$, then $H$ is a forest, which is considered a pathological case.  Some results for forests are known, but some of them have threshold functions that differ from the function described here.} $m_2(H) > 1$ that for all $r \geq 2$ the function $n^{-1/m_2(H)}$ is a threshold for $R_{H, H, \ldots, H}$. 

Let $m(G) = \max\{ |E(J)|/|V(J)| : J \leq G\}$.
Simple arithmetic gives that $m(G) = \min\{m : G \mbox{ is } (m, 0)\mbox{-sparse}\}$.
Let $m_2(H_1, H_2) = \max\{ |E(J)|/(|V(J)| - 2 + 1/m_2(H_2)) : J \leq H_1\}$.
The Kohayakawa--Kreuter Conjecture \cite{Kohayakawa1997} asks if for $(H_1, \ldots, H_r)$ with $m_2(H_1) \geq m_2(H_2) \geq \cdots \geq m_2(H_r)$ and $m_2(H_2) > 1$, then is $n^{-1/m_2(H_1, H_2)}$ a threshold for $R_{H_1, H_2, \ldots, H_r}$.
Mousset, Nenadov, and Samotij \cite{Mousset_Nenadov_Samotij_2020} established $n^{-1/m_2(H_1, H_2)}$ is an upper bound on a threshold.
Later, Bowtell, Hancock, and Hyde\cite{bowtell2023proof} and independently Kuperwasser, Samotij, and Wigderson \cite{KUPERWASSER_SAMOTIJ_WIGDERSON_2025} gave a necessary and sufficient condition for the conjecture: $n^{-1/m_2(H_1, H_2)}$ is a lower bound for a threshold if and only if $G' \notin R_{H_1, H_2}$ whenever $m(G') \leq m_2(H_1, H_2)$.

As partial progress towards the full Kohayakawa--Kreuter Conjecture, Bowtell, Hancock, and Hyde\cite{bowtell2023proof} and Kuperwasser, Samotij, and Wigderson \cite{KUPERWASSER_SAMOTIJ_WIGDERSON_2025} describe ways to partition a $(m_2(H_1, H_2), 0)$-sparse graph $E(G') = E(G_1) \cup E(G_2)$ such that $H_1 \not\leq G_1$ and $H_2 \not\leq G_2$.
The different partitions follow the general idea of proving that $G_1$ and $G_2$ satisfy some condition that $H_1$ and $H_2$ do not.
For example, if $\chi(H_2) \geq 3$, then both groups independently found a partition where $G_2$ is bipartite and $G_1$ has degeneracy that is strictly less than the degeneracy of $H_1$.
A natural approach is to use sparsity as the condition: find some $a_1, a_2, b_1, b_2$ such that $G_i$ is $(a_i, b_i)$-sparse but $H_i$ is not.
This is precisely the motivation for Kuperwasser, Samotij, and Wigderson's conjecture that a $(m, 0)$-sparse graph can be partitioned into a $(1,-1)$-sparse graph and a $(m, 1-2m)$-sparse graph: by the assumption $m(H_2) > 1$ it follows that $H_2$ is not $(1,-1)$-sparse, and it is well-known (e.g., see \cite{KUPERWASSER_SAMOTIJ_WIGDERSON_2025}) that when $m_2(H_2) < m_2(H_1)$ it follows that $m_2(H_1, H_2) < m_2(H_1)$, and therefore $H_1$ is not  $(m_2(H_1, H_2), 1-2m_2(H_1, H_2))$-sparse.
(The case when $m_2(H_2) = m_2(H_1)$ had already been settled by Kuperwasser and Samotij \cite{Kuperwasser_Samotij_2024}.)

It is also the motivation for the progress made by Christoph, Martinsson, Steiner, and Wigderson, who proved their graph partition results are sufficient to establish the Kohayakawa--Kreuter Conjecture.
They do this by splitting into four cases.
If $m_2(H_1, H_2) \leq 1.5$, then they prove they can partition $G'$ into two forests, which suffices.
If $H_2$ is not a pseudoforest, then a partition of $G'$ into a $(1,0)$-sparse and a $(m, 1-2m)$-sparse subgraph suffices.
They also prove that if $H_2$ is a pseudoforest with girth at least $4$, then $H_2$ is not strictly $(m_2(H_1, H_2), -4m_2(H_1, H_2)/3)$-sparse, and thus a partition into a $(1,-1)$-sparse and a strictly $(m_2(H_1, H_2), -4m_2(H_1, H_2)/3)$-sparse subgraph suffices.
Finally, if $H_2$ is a pseudoforest that contains a triangle, then they appeal to the above partition into a bipartite subgraph and a degenerate subgraph.

Several extensions of the Kohayakawa--Kreuter Conjecture remain open, including the Kohayakawa--Kreuter Conjecture for Families.
We say that $G$ is Ramsey to a tuple of graph families $(\mathcal{H}_1, \ldots, \mathcal{H}_r)$ if for every partition $E(G) = E(G_1) \cup E(G_2) \cup \cdots \cup E(G_r)$ there exists an $i$ such that $H_i \leq G_i$ for some $H_i \in \mathcal{H}_i$.
We can define $R_{\mathcal{H}_1, \ldots, \mathcal{H}_r}$ in the analogous way, which is also a monotone property.
The Kohayakawa--Kreuter Conjecture for Families is that for $(\mathcal{H}_1, \ldots, \mathcal{H}_r)$ that avoid the pathological cases, a threshold for $R_{\mathcal{H}_1, \ldots, \mathcal{H}_r}$ is the minimum of thresholds for $R_{H_1, H_2, \ldots, H_r}$, where $H_i \in \mathcal{H}_i$.
The upper bound is easy to prove: the definitions imply $R_{H_1, \ldots, H_r} \subseteq R_{\mathcal{H}_1, \ldots, \mathcal{H}_r}$.

So it stands to reason that the challenge of the Kohayakawa--Kreuter Conjecture for Families will be in establishing the lower bound.
That Theorem \ref{main theorem} resolves the Kohayakawa--Kreuter Conjecture for Families follows from the fact that the two relevant results have been proven in the context of families.
That is, (1) Kuperwasser, Samotij, and Wigderson \cite{KUPERWASSER_SAMOTIJ_WIGDERSON_2025} showed that the lower bound is true if and only if $G' \notin R_{\mathcal{H}_1, \mathcal{H}_2}$ whenever $m(G) \leq \min_{H_1 \in \mathcal{H}_1, H_2 \in \mathcal{H}_2} m_2(H_1, H_2)$, and (2) Kuperwasser and Samotij \cite{Kuperwasser_Samotij_2024} proved the case for $\min_{H_1 \in \mathcal{H}_1} m_2(H_1) = \min_{H_2 \in \mathcal{H}_2}m_2(H_2)$.

However, the existing results do not resolve the conjecture for families.
While the Kohayakawa--Kreuter Conjecture asks for a structure in $G_i$ that is not in $H_i$, the Kohayakawa--Kreuter Conjecture for Families asks for a structure in $G_i$ that is not present in any member of $\mathcal{H}_i$.
For example, a bipartite graph may contain a copy of $C_6$, and careful examination of the definitions reveals that every $H_1$ is strictly $(m_2(H_1, C_3), -4m_2(H_1, C_3)/3)$-sparse.
So while the above arguments resolve each of the cases $\mathcal{H}_2 = \{C_6\}$ and $\mathcal{H}_2 = \{C_3\}$, the case $\mathcal{H}_2 = \{C_3, C_6\}$ remains open.

Kuperwasser, Samotij, and Wigderson propose sparsity as the most promising condition for being universally applicable to the graphs in some graph family $\mathcal{H}_i$; in the final paragraph of Section 1.4 they write that their conjecture is ``...the right way to resolve [the Kohayakawa--Kreuter Conjecture for Families] in its entirety...''
Therefore, with Theorem \ref{main theorem}, we have accomplished the last task in their approach to resolving the Kohayakawa--Kreuter Conjecture for Families.

\section{Partitioning Via Matroids}\label{sec: partitioning}

In the following we will give conditions for when a $(a, b)$-sparse graph can be partitioned into a $(a_1, b_1)$-sparse subgraph and a $(a_2, b_2)$-sparse subgraph using matroids.
We will only assume that $(E(G), \mathcal{I}_{a,b})$ is a matroid when $a,b$ are integral, and the Matroid Union Theorem.
The rank function $r$ of a matroid $(U, \mathcal{I})$ is defined as $r(A) = \max\{|I|: I \subset A, I \in \mathcal{I}\}$.
The Matroid Union Theorem (e.g. see \cite{West_2020}) states that for a finite set of matroids over a common set of elements $(U, \mathcal{I}_i)$ with rank functions $r_i$ there is a matroid $(U, \mathcal{I}_+)$ defined as $\mathcal{I}_+ = \{\cup_i I_i  : I_i \in \mathcal{I}_i\}$ whose rank function is $r_+(A) = \min_{B \subseteq A} \{ |A \setminus B| + \sum_i r_i(B)\}$.

Let $\mathcal{I}_i$ denote the graphs that are $(a_i, b_i)$-sparse, and let $r_i$ be the associated rank function.
Let $(E(G), \mathcal{I}_+)$ be the matroid formed from the union of $(E(G), \mathcal{I}_1)$ and $(E(G), \mathcal{I}_2)$.
So $G$ can be partitioned into a $(a_1, b_1)$-subgraph and a $(a_2, b_2)$-subgraph if and only if $r_+(E(G)) = |E(G)|$, which by the Matroid Union Theorem is equivalent to proving $r_1(E(B)) + r_2(E(B)) \geq e(B)$ for all subgraphs $B \leq G$.
This may seem like circular logic: to establish $r_1(E(B)) + r_2(E(B)) \geq e(B)$ all we have to work with is that $B$ is $(a_1 + a_2, b_1 + b_2)$-sparse.
The benefit is that for $E(G_1) \cup E(G_2)$ to be a partition of $E(G)$ we need that (1) $e(G_1) + e(G_2) \geq e(G)$ and (2) $E(G_1) \cap E(G_2) = \emptyset$; but the Matroid Union Theorem implies that it suffices to only satisfy (1) for some partition to exist.

Observe that while the above requires that $\mathcal{I}_1, \mathcal{I}_2, \mathcal{I}_+$ are matroids, it does not require that the family of $(a,b)$-sparse graphs form a matroid. 
This is good, because in Theorem \ref{problematic partitioning} we will want to show that $r_+(E(G)) = |E(G)|$ for a $G$ that satisfies a stronger sparsity condition with nonintegral coefficients (and therefore does not form a matroid).

We will begin by considering the easiest case, which is $0 \leq -b_i \leq a_i$.
The cases that we consider after will not contain full proofs, but instead simply remark on what parts of the easiest case need to be modified.
We also illustrate cases where a $(a_1 + a_2, b_1 + b_2)$-sparse graph can not be partitioned into a $(a_1, b_1)$-sparse subgraph and a $(a_2, b_2)$-sparse subgraph to highlight the sharpness of the results.

A graph is $(a, b)$-tight if\footnote{Some people also require that each proper subgraph is strictly $(a,b)$-sparse, but we do not.} it is both $(a,b)$-sparse and satisfies $e(G) = a v(G) + b$.  

\begin{counterex}\label{disconnected counterexample}
For integral $a_1, a_2$, there exists infinitely many graphs that are $(a_1 + a_2, 0)$-sparse that do not partition into a graph that is $(a_1, -1)$-sparse and a graph that is $(a_2, 1)$-sparse.
\end{counterex}
\begin{proof}
Let $H$ be a $2(a_1 + a_2)$-regular graph on $n$ vertices, which is $(a_1 + a_2, 0)$-tight.
Let $G$ be constructed from the union of $t$ disjoint copies of $H$, which is also $(a_1 + a_2, 0)$-sparse.
Let $G_1$ denote any $(a_1, -1)$-sparse subgraph of $G$, and let $E(G_2) = E(G) \setminus E(G_1)$.
As $G_1$ contains at most $a_1 n - 1$ edges in a copy of $H$, we have that $G_2$ contains at least $a_2 n + 1$ edges in each copy of $H$.
Therefore $G_2$ contains at least $a_2 |V(G)| + t$ edges overall, and thus is not $(a_2, 1)$-sparse when $t > 1$.
\end{proof}

\begin{counterex}\label{tree counterexample}
For integral $a,t$ with $1 \leq t < a$, there exists infinitely many graphs that are $(2a, -2a)$-sparse that do not partition into a graph that is $(a, -t)$-sparse and a graph that is $(a, t-2a)$-sparse.
\end{counterex}
\begin{proof}
Let $H$ be a graph that is a union of $2a$ disjoint spanning trees on $n$ vertices, which is $(2a, -2a)$-tight.
Let $H_1$ and $H_2$ be two copies of $H$, and let $G$ be formed by gluing one vertex of $H_1$ to one vertex of $H_2$.
As $G$ is also the union of $2a$ disjoint spanning trees, it is also $(2a, -2a)$-tight.
Let $G_2$ denote any $(a, t-2a)$-sparse graph of $G$, and let $E(G_1) = E(G) - E(G_2)$.
As $G_2$ contains at most $an + t - 2a$ edges in each of $H_i$ and $H_i$ has $2an-2a$ edges, we have that $G_1$ contains at least $an-t$ edges in each $H_i$. 
So 
$$ |E(G_1)| \geq 2(an-t) = a |V(G)| - t + (a-t), $$
and therefore $G_1$ is not $(a, -t)$-sparse.
\end{proof}

Intuitively speaking, a potential function provides a real valued score to each subgraph, where the score is a positive scalar times the number of vertices in the subgraph minus another positive scalar times the number of edges in the subgraph.
The score is called the ``potential'' of the subgraph.
Potential functions with appropriately chosen scalars are a strong tool for understanding a sparsity condition on a graph.
We will use potential functions throughout this paper with varying scalar coefficients to reflect the different sparsity conditions (e.g. $(k+1,b)$-sparse versus $(k,b-1)$-sparse).

One of the most important uses of potential functions is their submodularity.
That is for potential function $\rho$ and subgraphs $H_1, H_2$ of some common graph $G$ (so that operations like $H_1 \cap H_2$ and $H_1 \cup H_2$ are well-understood), then by counting how many times an edge or vertex shows up (and recalling that vertices contribute positively and edges contribute negatively to a linear function) we have that
$$ \rho(H_1 \cap H_1) + \rho(H_1 \cup H_2) \leq \rho(H_1) + \rho(H_2). $$
If $\rho(H) = a v(H) - e(H)$, then every nonempty subgraph $H$ of an $(a,b)$-sparse graph satisfies $\rho(H) \geq -b$, and $\rho(H) = -b$ if and only if $H$ is $(a,b)$-tight.
The term ``nonempty'' is subtle yet important here; the empty graph has potential zero and an isolated vertex has potential $a$, each of which can be less than $-b$.
In particular, in the following theorem the condition $a_i \geq -b_i \geq 0$ is required specifically so that all subgraphs with a vertex have potential at least $-b$.

\begin{theorem}\label{basic modularity theorem}[Theorem \ref{partitioning theorem}(A)]
For integral $a_i,b_i$ with $a_i \geq -b_i \geq 0$ for $i \in \{1,2\}$, if $G$ is $(a_1 + a_2, b_1 + b_2)$-sparse, then there exists a partition $E(G) = E(G_1) \cup E(G_2)$ such that $G_i$ is $(a_i, b_i)$-sparse for $i \in \{1, 2\}$.
\end{theorem}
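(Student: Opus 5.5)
We use the matroid framework set up above. Since $a_i,b_i$ are integral with $-2a_i \le -a_i \le b_i \le 0$, Lorea's theorem shows that $(E(G),\mathcal{I}_i)$ is a matroid with rank function $r_i$ for $i\in\{1,2\}$. By the Matroid Union Theorem it suffices to show $r_1(E(B))+r_2(E(B)) \ge e(B)$ for every subgraph $B \le G$. We may assume $B$ has no isolated vertices, since deleting them changes no term.

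The key step is a lower bound on $r_i(E(B))$. Take a maximal $(a_i,b_i)$-sparse edge set $I\subseteq E(B)$, i.e.\ a basis. We claim that on each connected component $C$ of $B$ (with $v(C)\ge 2$), the restriction of $I$ spans at least $\min\{e(C),\,a_i v(C)+b_i\}$ edges.

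To prove the claim, suppose $I\cap E(C)$ is not tight for $C$ and some edge $e\in E(C)\setminus I$ exists. Maximality of $I$ gives a subgraph $T_e\subseteq I+e$ containing $e$ with $e(T_e) > a_i v(T_e)+b_i$, so $T_e - e$ is a tight subgraph of $I$. The tight subgraphs of $I$ are closed under union and intersection when the intersection is nonempty. This follows from the submodularity of $\rho(H)=a_iv(H)-e(H)$ together with $\rho\ge -b_i$ on nonempty subgraphs. Here the hypothesis $a_i\ge -b_i$ is what guarantees that single vertices have potential at least $-b_i$, so that vertex-only intersections are harmless. We need a little more, though: when two tight subgraphs $T,T'$ share \emph{no} vertex, $\rho(T\cup T')=-2b_i \ge -b_i$ because $b_i\le 0$, so the union is still sparse. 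Combining this with the ability to walk along edges of $C$, we grow a tight subgraph of $I$ covering all of $V(C)$. Any edge of $C$ joining two tight pieces is outside $I$ and forces a tight subgraph through its endpoints, which merges the pieces. The end result is that $I\cap E(C)$ contains a spanning tight subgraph, giving $a_iv(C)+b_i$ edges.

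Summing over components $C_1,\dots,C_s$ of $B$, it suffices to check for each component $C$ that
\[ \min\{e(C),a_1v(C)+b_1\}+\min\{e(C),a_2v(C)+b_2\}\ge e(C). \]
If either minimum equals $e(C)$ this is immediate. Otherwise the left side is $(a_1+a_2)v(C)+b_1+b_2\ge e(C)$ by the sparsity of $G$, applied to the nonempty subgraph $C$.

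The main obstacle is the claim in the second paragraph: showing that a basis of $\mathcal{I}_i$ restricted to a connected component is either all of $E(C)$ or tight on all of $V(C)$. Merging tight pieces along edges requires the exact inequalities $-a_i\le b_i\le 0$. These are precisely what fail in Counterexamples \ref{disconnected counterexample} and \ref{tree counterexample}, so the argument must be organized carefully around the submodularity inequality. If this global claim proves delicate, my fallback is to use the closure operator of the matroid. Since $r_i$ is determined on the closure, I would show that the closure of any nonempty connected edge set which is not independent contains a tight spanning subgraph on its vertex set. I would then apply the Matroid Union formula componentwise.
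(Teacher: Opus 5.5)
There is a genuine gap. The per-component claim in your second paragraph is false, so the argument cannot be completed as organized.

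\textbf{Counterexample.} Take $(a_1,b_1)=(a_2,b_2)=(2,-2)$ and let $G=C$ be $K_5$ with one pendant edge attached, so $G$ is $(4,-4)$-sparse. A $(2,-2)$-sparse edge set has at most $2\cdot 5-2=8$ edges inside the $K_5$, so $r_i(E(C))=9$. But $\min\{e(C),a_iv(C)+b_i\}=\min\{11,10\}=10$.

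\textbf{The faulty step.} The error is in the merging. Consider an edge of $C$ that joins two distinct maximal tight pieces of $I$, or meets a vertex lying in no tight piece. Such an edge can \emph{never} lie outside $I$: otherwise $T_e-e$ would be a tight subgraph containing both endpoints, contradicting maximality of the pieces. So these edges belong to $I$, and nothing forces the pieces to merge. The pendant edge above is exactly such an edge.

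Your remark that a disjoint union of tight pieces is still sparse does not help. That union has potential $-2b_i>-b_i$ when $b_i<0$, so it is not tight. The closure fallback fails on the same example: the closure of $E(C)$ in $G=C$ is $E(C)$ itself, of rank $9<10$.

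\textbf{The missing idea.} You cannot bound $r_1$ and $r_2$ independently for every $B$; the two matroids have to be coupled through a minimal counterexample, as the paper does. If $B$ is a minimal edge set with $r_1(B)+r_2(B)<|B|$, then $r_i(B\setminus\{e\})=r_i(B)$ for every $e\in B$ and both $i$. That is, no edge of $B$ is a coloop of either restricted matroid. Equivalently, you could first strip coloops by induction on $|B|$: deleting a coloop of $\mathcal{I}_1$ restricted to $B$ lowers $r_1$ and $|B|$ by one each and does not increase $r_2$.

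The paper then proves precisely what your example displays. Any edge $e$ not inside a single maximal tight piece of $B_1$ belongs to $B_1$ and is a coloop, i.e.\ $r_1(B\setminus\{e\})=r_1(B)-1$. Once coloops are excluded in both matroids and $B$ is connected, every edge lies inside one maximal tight piece. Hence a single tight piece spans $W$, giving $|B_1|=a_1|W|+b_1$, and likewise $|B_2|=a_2|W|+b_2$. Only then does the sparsity of $G$ give $|B_1|+|B_2|\ge|B|$. With that step repaired, your reduction to components and your final inequality go through.
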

\begin{proof}
By way of contradiction, let $B$ be the smallest set of edges in $G$ such that $r_1(B) + r_2(B) < |B|$.
The minimality of $B$ implies that for every edge $e \in B$ we have that $r_i(B \setminus \{e\}) = r_i(B)$ for $i \in \{1,2\}$.

Suppose that $B$ is disconnected; and let $B = B' \cup B''$ be a partition into disjoint subgraphs.
Because $b_i \leq 0$, the union of two disjoint $(a_i, b_i)$-sparse graphs is again $(a_i, b_i)$-sparse, and therefore $r_i(B') + r_i(B'') = r_i(B)$.
By minimality of $B$, we have that $r_1(B') + r_2(B') \geq |B'|$ and $r_1(B'') + r_2(B'') \geq |B''|$.
Therefore $r_1(B) + r_2(B) \geq |B|$, which contradicts the choice of $B$, and so we may assume $B$ is connected.

Let $B_i$ denote a maximal element of $\mathcal{I}_i$ that is contained in $B$, which by definition has size $r_i(B)$.
Let $W$ denote the set of vertices spanned by $B$, and let $H,H_i$ denote the subgraphs with vertex set $W$ and edge set $B, B_i$, respectively.
By construction, we have that $H_i$ is $(a_i, b_i)$-sparse and $H$ is $(a_1 + a_2, b_1 + b_2)$-sparse.

If $H_1$ and $H_2$ are both tight, then $|B_1| + |B_2| = (a_1 + a_2)|W| + (b_1 + b_2) \geq |B|$, and we are done.
So, by way of contradiction, suppose there exists some $i$ where $H_i$ is not tight.
Without loss of generality, let $i=1$.

By submodularity of the potential function $\rho(J) = a_1 v(J) - e(J)$, the union of two $(a_1, b_1)$-tight subgraphs whose intersection contains at least one vertex is again a $(a_1, b_1)$-tight subgraph.
This implies that maximal $(a_1, b_1)$-tight subgraphs are vertex disjoint.
So we can partition the vertex set $W = T_1 \cup T_2 \cup \cdots \cup T_r \cup L$ (possibly $r=0$ or $L = \emptyset$) where each $T_i$ is the vertex set of a maximal-sized $(a_1, b_1)$-tight subgraph of $H_1$, and every edge not contained in a $T_i$ is not part of any tight subgraph.
As $H_1$ is not tight, we have $r \geq 2$ or $L \neq \emptyset$.
As $B$ is connected, there exists some edge $e$ that is not contained in any single $T_j$.

Let $\overline{B_1} = B \setminus B_1$.
If $\overline{B_1} = \emptyset$, then $|B_1| = |B|$ and we are done, so suppose $f \in \overline{B_1}$.
By the maximality of $B_1$, we have that $B_1 + f$ is not $(a_1, b_1)$-sparse.
Because $a_1$ and $b_1$ are integral, this means that $f$ is contained in a $(a_1, b_1)$-tight subgraph $H_f$ of $B_1$.
This implies that each component of $\overline{B_1}$ is contained in some $T_i$, which implies that $e \in B_1$.

So now we argue that $r_1(B \setminus \{e\}) = r_1(B)-1$, which contradicts the choice of $B$.
Let $F_i$ denote the edges of $B$ that are contained by $V(T_i)$, and let $F_* = B - \cup_i F_i$.
Observe that $e \in F_*$, and more generally we argue that for any $F' \subseteq F_*$ we have $r_1(B \setminus F') = r_1(B)-|F'|$.
Let $B' \subseteq B \setminus F'$ be independent in $(E(G), \mathcal{I}_1)$ such that $|B'| = r_1(B \setminus F')$.
We bound $|B'|$ by bounding the size of its intersection with $F_1, \ldots, F_r, F_*$ and summing.
Specifically, we have $|B' \cap F_i| \leq a_1 |V(F_i)| + b_1 = |B_1 \cap F_i|$ and $|B' \cap F_*| \leq |F_*| - |F'| = |B_1 \cap F_*| - |F'|$, and so 
$r_1(B \setminus F') = |B'| \leq |B_1| - |F'| = r_1(B) - |F'|$.
\end{proof}

In the next theorem we prove that we can partition via matroids if both $b_1$ and $b_2$ are positive, and first we give an intuitive explanation.
In Counterexample \ref{disconnected counterexample} we see that if $b_1$ is positive, then the union of two disjoint $(a, b_1)$-tight subgraphs is not $(a, b_1)$-sparse.
But if $b_1$ and $b_2$ are both positive, then $b_1 + b_2$ is also positive, and therefore a $(a_1 + a_2, b_1 + b_2)$-sparse graph can not be formed from the union of several disjoint $(a_1 + a_2, b_1 + b_2)$-tight subgraphs.
So the fact that in our partition $E(G) = E(G_1) \cup E(G_2)$ the graph $G_1$ is forced to be strictly $(a_1, b_1)$-sparse in all but one component is canceled out by the fact that $G$ must also be strictly $(a_1 + a_2, b_1 + b_2)$-sparse in all but one component.

\begin{theorem}\label{negative modularity theorem}[Theorem \ref{partitioning theorem}(B)]
For integral $a_i,b_i$ with $b_i \geq 0$ for $i \in \{1,2\}$, if $G$ is $(a_1 + a_2, b_1 + b_2)$-sparse, then there exists a partition $E(G) = E(G_1) \cup E(G_2)$ such that $G_i$ is $(a_i, b_i)$-sparse.
\end{theorem}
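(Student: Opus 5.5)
The plan is to follow the proof of Theorem \ref{basic modularity theorem} through the Matroid Union Theorem, but to replace the reduction to connected $B$. That reduction fails here, since a disjoint union of $(a_i,b_i)$-sparse graphs need not be sparse when $b_i>0$. Instead, the positivity of $b_i$ will make \emph{all} tight subgraphs merge into a single one. For integral $a_i>0$ and $b_i\ge 0$, $(E(G),\mathcal{I}_i)$ is a matroid by White and Whiteley, so it suffices to show $r_1(B)+r_2(B)\ge |B|$ for every edge set $B$. Suppose this fails, and take $B$ minimal with $r_1(B)+r_2(B)<|B|$. Minimality gives $r_i(B\setminus\{e\})=r_i(B)$ for every $e\in B$ and $i\in\{1,2\}$. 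As before, let $W$ be the set of vertices spanned by $B$, let $B_i\subseteq B$ be a maximal independent set of $\mathcal{I}_i$, and let $H,H_i$ be the graphs on $W$ with edge sets $B,B_i$.

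The key structural step is this: with $\rho_i(J)=a_iv(J)-e(J)$, the union of any two $(a_i,b_i)$-tight subgraphs $T,T'$ of $H_i$ is again tight. Since $b_i\ge 0$, every subgraph of $H_i$ containing at least one vertex (including edgeless ones) has $\rho_i\ge 0\ge -b_i$, while the empty graph has potential $0$.
\begin{itemize}
\item If $T\cap T'$ contains a vertex, submodularity gives $\rho_i(T\cup T')\le -2b_i-\rho_i(T\cap T')\le -b_i$, so $T\cup T'$ is tight.
\item If $T$ and $T'$ are vertex-disjoint, then $\rho_i(T\cup T')=-2b_i$. This is at least $-b_i$ by sparsity, which forces $b_i=0$, and then $T\cup T'$ is again tight.
\end{itemize}
Hence $H_i$ has a unique maximal tight subgraph $T_i$, possibly empty.

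Next I would show that every edge of $B$ lies in $T_i$ for both $i$. Edges of $B\setminus B_i$ lie in $T_i$: by integrality, adding such an edge $f$ to $B_i$ breaks sparsity, so $f$ is spanned by a tight subgraph of $H_i$. Now suppose some edge $e\in B_i$ is not in $E(T_i)$. I would rerun the counting argument from Theorem \ref{basic modularity theorem} with the single tight piece $T_i$ in place of $T_1,\dots,T_r$ and $F_*=B\setminus E(T_i)$:
\begin{itemize}
\item any independent $B'\subseteq B\setminus\{e\}$ has $|B'\cap E(T_i)|\le a_iv(T_i)+b_i=|B_i\cap E(T_i)|$;
\item and $|B'\cap F_*|\le |F_*|-1=|B_i\cap F_*|-1$, using that $F_*\subseteq B_i$.
\end{itemize}
Summing gives $r_i(B\setminus\{e\})\le r_i(B)-1$, contradicting minimality. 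So $B\subseteq E(T_i)$, and since $W$ is exactly the set of vertices spanned by $B$, we get $V(T_i)=W$. Therefore $r_i(B)=|B_i|\ge e(T_i)=a_i|W|+b_i$. Summing over $i$ and using that $H$ is $(a_1+a_2,b_1+b_2)$-sparse gives $r_1(B)+r_2(B)\ge (a_1+a_2)|W|+b_1+b_2\ge |B|$, a contradiction. If $B=\emptyset$ the claim is trivial.

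I expect the main obstacle to be the merging step, in particular checking that the intersection and disjoint cases really cover everything. This includes an intersection consisting only of isolated vertices, which is exactly where $b_i\ge 0$ is used in place of the condition $a_i\ge -b_i$ from part (A). It also includes the mixed case $b_1=0<b_2$, which the case split above handles because each matroid is treated separately.
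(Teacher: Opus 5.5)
Your proof is correct, but it follows a different route from the paper's.

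\textbf{The paper's route.} The paper keeps the proof of Theorem \ref{basic modularity theorem} intact and replaces only the step showing that the minimal $B$ is connected. It does this by induction on the size of $G$:
\begin{enumerate}
\item For a disconnected $B=B'\cup B''$, it takes the least $b',b''\ge 0$ such that $B'$, $B''$ are $(a_1+a_2,b')$- and $(a_1+a_2,b'')$-sparse.
\item It shows $b'+b''\le b_1+b_2$ by taking the union of a tight subgraph from each side.
\item It splits $b_1$ and $b_2$ into nonnegative pieces $b_i',b_i''$ and glues the inductive partitions of $B'$ and $B''$.
\end{enumerate}
Once $B$ is connected, the argument of part (A) runs verbatim: several vertex-disjoint maximal tight subgraphs, plus an edge $e$ crossing between them.

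\textbf{Your route.} You never need connectivity. Because $b_i\ge 0$, vertex-disjoint tight subgraphs of $H_i$ either cannot coexist ($b_i>0$) or merge ($b_i=0$), so $H_i$ has a single maximal tight subgraph $T_i$. Minimality of $B$ gives coloop-freeness, and that forces $V(T_i)=W$. This yields $r_i(B)=a_i|W|+b_i$ for each $i$ separately, and the sparsity of $G$ is used only once, at the end.

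\textbf{Comparison.} Your approach avoids the induction and the bookkeeping with $b',b'',b_i',b_i''$, which is the most delicate part of the paper's proof. It also makes clear where the sign hypothesis enters: part (A) needs connectivity precisely because disjoint tight pieces do not merge when $b_i<0$. The paper's route, in exchange, reuses the part (A) proof with a single substitution.

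\textbf{A notational fix.} Since $E(T_i)\subseteq B_i$, no edge of $B\setminus B_i$ ever lies in $E(T_i)$. So your phrase ``lie in $T_i$'', your set $F_*$, and your conclusion ``$B\subseteq E(T_i)$'' must all refer to the edges of $B$ spanned by $V(T_i)$ (the paper's $F_i$).
\begin{itemize}
\item With $F_*$ defined as the edges of $B$ \emph{not} spanned by $V(T_i)$, you get $F_*\subseteq B_i$, as your second bullet requires.
\item Your first bullet, applied to $B'\cap B[V(T_i)]$, holds because a maximal tight $T_i$ is induced in $H_i$. Hence the edges of $B_i$ spanned by $V(T_i)$ are exactly $E(T_i)$, and this set has size $a_iv(T_i)+b_i$.
\end{itemize}
With these readings, the conclusion $V(T_i)=W$ and the final count go through as you wrote them.
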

\begin{proof}
We repeat almost all of the details from the proof of Theorem \ref{basic modularity theorem}.
The only place where $0 \geq b_i$ for $i \in \{1,2\}$ is used in that proof is to show that $B$ is connected.
So this theorem will follow if we can simply argue that $0 \leq b_i$ for $i \in \{1,2\}$ also implies that $B$ is connected.

In order for this proof to work, we modify the proof to use induction on the size of $G$.
Suppose that $B$ is disconnected; and let $B = B' \cup B''$ be a partition into disjoint subgraphs.
Let $b'$ be the minimum integer such that $B'$ is $(a_1 + a_2, b')$-sparse, and define $b''$ similarly.
In particular, some subgraph of $B'$ is $(a_1 + a_2, b')$-tight and some subgraph of $B''$ is $(a_1 + a_2, b'')$-tight.
If either $b'$ or $b''$ is negative, then replace it with zero.

We argue that $b' + b'' \leq b_1 + b_2$. 
As $G$ is $(a_1 + a_2, b_1 + b_2)$-sparse, we have $b', b'' \leq b_1 + b_2$.
So we are done if $b' = 0$ or $b'' = 0$; this implies that there exists a subgraph $\widehat{B'}$ that is $(a_1+a_2,b')$-tight and a subgraph $\widehat{B''}$ that is $(a_1+a_2,b'')$-tight.
Let $\widehat{B} = \widehat{B'} \cup \widehat{B''}$, and observe that $\widehat{B}$ is $(a_1 + a_2, b' + b'')$-tight.
As $\widehat{B}$ is part of a $(a_1 + a_2, b_1 + b_2)$-sparse graph, it follows that $b' + b'' \leq b_1 + b_2$, as desired.

So we can find nonnegative integers $b_1', b_2', b_1'', b_2''$ such that 
\begin{itemize}
	\item $b_i' + b_i'' \leq b_i$ for $i \in \{1,2\}$,
	\item $b_1' + b_2' = b'$, and 
	\item $b_1'' + b_2'' = b''$.
\end{itemize}
By induction on the size of $G$, we have partitions $E(B') = E(H_1') \cup E(H_2')$ and $E(B'') = E(H_1'') \cup E(H_2'')$, where $H_i'$ and $H_i''$ are $(a_i, b_i')$-sparse and $(a_i, b_i'')$-sparse, respectively.
Observe that $H_i' \cup H_i''$ is $(a_i, b_i' + b_i'')$-sparse, and therefore $r(B) \geq |B|$, which contradicts the choice of $B$.
\end{proof}

In this paper we take special consideration for partitioning into two graphs when one of them is a forest.

\begin{counterex}\label{ring example}
For integral $a$, there exists infinitely many graphs that are $(a+1,-a-2)$-sparse that do not partition into subgraph that are $(1,-1)$-sparse and $(a,-a-1)$-sparse.
\end{counterex}
\begin{proof}
Let $H$ be a $(a+1, -a-2)$-tight graph with distinct vertices $x,y$.
An example of such a graph is a complete graph on $2a+2$ vertices with one edge removed.
Let $H_1, \ldots , H_t$ be copies of $H$ with $t \geq a+2$, where $H_i$ has vertices $x_i, y_i$.
Construct $G$ from the union of the $H_i$ and gluing $y_i$ to $x_{i+1}$ for each $i < t$ and gluing $y_t$ to $x_1$.

We claim that $G$ is $(a+1, -a-2)$-sparse.
Let $J$ be a subgraph of $G$, and define potential function $\rho(J) = (a+1)v(J) - e(J)$; our goal is to show that $\rho(J) \geq a+2$.
By submodularity we can assume that $J$ is connected.
Let $J_i$ be the subgraph of $J$ that is contained in $H_i$.
By the sparsity of $H$, we have that $\rho(J_i) \geq a+2$ for each $i$.
Because $J$ is connected, we can assume without loss of generality that $J_i$ is nonempty for $1 \leq i \leq t'$ for some $t'$.
If $t' < t$, then applying submodularity gives that 
\begin{eqnarray*}
	\rho(J)	& = & \rho(J_1) + \sum_{i=2}^{t'} (\rho( \cup_{1 \leq j \leq i} J_j ) - \rho( \cup_{1 \leq j < i} J_j )) \\
		& \geq & (a+2) + \sum_{i=2}^{t'} (\rho(J_i) - \rho(\{x_i\}) ) \\
		& \geq & (a+2) + \sum_{i=2}^{t'} (a+2 - (a+1)) \geq a+2.
\end{eqnarray*}
Now we consider the case $t' = t$.
We will apply a similar argument, but we must deal with the fact that $J_t \cap (\cup_{j < t} J_i)$ could contain two vertices ($x_t$ and $x_1$) instead of just one.
Let $J'' = J_t \cap (\cup_{j < t} J_i)$, and observe that $\rho(J'') \leq 2(a+1)$.
So, using $t \geq a+2$, we have 
\begin{eqnarray*}
	\rho(J)	& = & \rho(J_1) + (\rho(J) - \rho(\cup_{j < t}J_j)) + \sum_{i=2}^{t-1} (\rho( \cup_{1 \leq j \leq i} J_j ) - \rho( \cup_{1 \leq j < i} J_j )) \\
		& \geq & (a+2) + (\rho(J_t) - \rho(J'')) + \sum_{i=2}^{t-1} (\rho(J_i) - \rho(\{x_i\}) ) \\
		& \geq & (a+2) + (a+2 - 2(a+1)) + (t - 2)(a+2 - (a+1)) \geq a+2.
\end{eqnarray*}
This proves the claim.

Next we show that $G$ can not be partitioned into $F$ and $G_2$, where $F$ is a forest and $G_2$ is $(a,-a-1)$-sparse.
Suppose that $H$ has $n$ vertices.
Because $H$ is $(a+1, -a-2)$-tight, this means that it has exactly $(a+1)(n-1) - 1$ edges.
So $G$ has $t(n-1)$ vertices and $t((a+1)(n-1)-1)$ edges.
As $G_2$ is $(a,-a-1)$-sparse, it contains at most $a(n-1)-1$ edges in each $J_i$, and therefore it has $t(a(n-1)-1)$ edges overall.
As $F$ is a forest, it as at most $t(n-1)-1$ edges.
As $t(n-1)-1 + t(a(n-1)-1) < t((a+1)(n-1)-1)$, the proof is complete.
\end{proof}

Our fantasy was to prove that a $(k+\epsilon,0)$-sparse graph is $(k+1, b)$-sparse, and therefore it can be partitioned into a forest and a graph that is $(k,b+1)$-sparse.
This is Theorem \ref{basic modularity theorem} when $-b \leq k+1$, but by Counterexample \ref{ring example} (and generalizations of it) we know this is impossible when $-b > k+1$.
So instead we now must give less elegant statements, where we show that while $(k+1, b)$-sparse graphs can not be partitioned, the strictly smaller family of $(k+\epsilon,0)$-sparse graphs can be.
In the following we use $f(k, \epsilon)$ to be the formal definition for our intuitive value $-b$.
Our definition of $f$ caps the value of $f(k, \epsilon)$ at $2k$, which matches the threshold for a pathological matroid at $(k, 1-2k)$-sparsity.

Let 
\begin{equation}
f(k, \epsilon) = 
\begin{cases}
 2k 							& \text{if } \epsilon(2k+2) < 2 \\
 \lceil (2k+2)(1-\epsilon) \rceil 			& \text{if } \epsilon \in [2/(2k+2), 1/2) \\
 k+1							& \text{if } \epsilon \in [1/2, (k+2)/(2k+3) )\\
 \lceil (2k+3)(1-\epsilon) \rceil			& \text{if } \epsilon \geq (k+2)/(2k+3)
\end{cases}
\end{equation}

\begin{lemma} \label{lemma defining f}
If $G$ is $(k+\epsilon, 0)$-sparse, then $G$ is $(k+1, -f(k, \epsilon))$-sparse.
\end{lemma}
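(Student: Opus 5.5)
We prove the bound by direct counting, with no matroid machinery. Let $H$ be a nonempty subgraph of $G$, which we take to mean that $H$ has at least one edge, so $n := v(H) \geq 2$. (For a single vertex the inequality $0 \leq (k+1) - f(k,\epsilon)$ fails whenever $f(k,\epsilon) > k+1$, so this reading of ``nonempty'' is needed and should be made explicit.) Set $\rho(H) = (k+1)n - e(H)$. The goal is $\rho(H) \geq f(k,\epsilon)$ in each of the four cases of the definition of $f$.

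We use two independent lower bounds on $\rho(H)$.
\begin{enumerate}
\item Sparsity gives $e(H) \leq (k+\epsilon)n$. Since $e(H)$ is an integer, $\rho(H) \geq \lceil (1-\epsilon) n \rceil$, which is nondecreasing in $n$.
\item Simplicity gives $e(H) \leq \binom{n}{2}$, so $\rho(H) \geq n(2k+3-n)/2$. This is concave in $n$. On $2 \leq n \leq 2k+1$ it is minimized at the endpoints, where it equals $2k+1$. At $n = 2k+2$ it equals $k+1$.
\end{enumerate}
Hence $\rho(H) \geq 2k+1$ when $n \leq 2k+1$, $\rho(H) \geq \max\{k+1, \lceil (2k+2)(1-\epsilon)\rceil\}$ when $n = 2k+2$, and $\rho(H) \geq \lceil (1-\epsilon)(2k+3) \rceil$ when $n \geq 2k+3$.

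It remains to compare these bounds with $f$ case by case.
\begin{itemize}
\item In the first case, $f = 2k$. Small $n$ is covered by the bound $2k+1$. For $n \geq 2k+2$, the hypothesis $\epsilon(2k+2) < 2$ gives $(1-\epsilon)n \geq (1-\epsilon)(2k+2) > 2k$.
\item In the second case, $\epsilon < 1/2$ forces $f = \lceil (2k+2)(1-\epsilon)\rceil \leq 2k$, since $\epsilon \geq 2/(2k+2)$. So $n \leq 2k+1$ is handled by the bound $2k+1$, and $n \geq 2k+2$ by monotonicity of $\lceil (1-\epsilon)n \rceil$.
\item In the third case, $f = k+1$. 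The bound for $n \leq 2k+2$ is at least $k+1$. For $n \geq 2k+3$, the hypothesis $\epsilon < (k+2)/(2k+3)$ gives $(1-\epsilon)(2k+3) > k+1$.
\item In the last case, $\epsilon \geq (k+2)/(2k+3)$ gives $f = \lceil (2k+3)(1-\epsilon) \rceil \leq k+1$. So $n \leq 2k+2$ is fine, and $n \geq 2k+3$ follows from monotonicity.
\end{itemize}

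There is no real obstacle here. The only delicate points are pinning down the meaning of ``nonempty'', and using the integrality of $e(H)$ to pass from $(1-\epsilon)n$ to its ceiling, which is exactly where the ceilings in the definition of $f$ come from.
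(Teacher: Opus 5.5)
Your proof is correct and is essentially the paper's argument: the same potential $\rho(H)=(k+1)v(H)-e(H)$, the same two bounds $e(H)\le\lfloor (k+\epsilon)v(H)\rfloor$ and $e(H)\le\binom{v(H)}{2}$, concavity with $\rho(K_2)=\rho(K_{2k+1})=2k+1$ and $\rho(K_{2k+2})=k+1$, and monotonicity of $\lceil(1-\epsilon)v\rceil$. The differences are cosmetic: the paper groups your four cases into $\epsilon<1/2$ and $\epsilon\ge 1/2$, and it handles the ``nonempty'' issue as you do, by requiring the bound only for subgraphs with $v\ge 2$.
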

\begin{proof}
We will work with a potential function $\rho$ over subgraphs $H \subseteq G$ with $v = |V(H)|$ and $e = |E(H)|$ defined as $\rho(H) = (k+1)v - e$.
Our goal is to prove $\rho(H) \geq f(k, \epsilon)$ for all $H$ with $v \geq 2$.
By construction, $f(k, \epsilon) \leq 2k$.  

As $G$ is simple, we have that $e \leq \min\{ \lfloor m v \rfloor, {v \choose 2}\}$.
Observe that as a function over $t$, the polynomial $\rho(K_t) = (k+1)t - {t \choose 2}$ is quadratic and concave.
Moreover, $\rho(K_2) = \rho(K_{2k+1}) = 2k+1$, so we may assume $v \geq 2k+2$.

\textbf{Case 1:} $\epsilon < 1/2$.
We have that $e \leq \lfloor (k+\epsilon)v \rfloor$, so
\begin{eqnarray*}
 \rho(H) 	& \geq 	& (k+1) v - \lfloor (k+\epsilon)v \rfloor \\
 		& = 	& \lceil v (1-\epsilon) \rceil \geq \lceil (2k+2) (1-\epsilon) \rceil.
\end{eqnarray*}
Observe that $\epsilon (2k+2) < 2$ if and only if $ \lceil (2k+2) (1-\epsilon) \rceil > 2k$.

\textbf{Case 2:} $\epsilon \geq 1/2$.
If $\epsilon \geq 1/2$, then $K_{2k+2}$ is $(k+\epsilon,0)$-sparse, where $\rho(K_{2k+2}) = k+1$.
Observe that $\epsilon \geq \frac{k+2}{2k+3}$ if and only if $k+1 \geq  \lceil (2k+3)(1-\epsilon) \rceil$.
So now we may assume that $v \geq 2k+3$.
We have that $e \leq \lfloor (k+\epsilon)v \rfloor$, so
$$ \rho(H) \geq \lceil v (1-\epsilon) \rceil \geq \lceil (2k+3) (1-\epsilon) \rceil. $$
\end{proof}

\begin{prop}\label{hypergraph size claim}
Let $\mathcal{H}$ be a hypergraph with hyperedges $F_1, F_2, \ldots, F_r$ that span $n$ vertices.
If there is an $s$ such that for each $i$ we have $| F_i \cap \cup_{j \neq i} F_j| \geq s$, then $\sum_i |F_i| \geq n + r s / 2$.
\end{prop}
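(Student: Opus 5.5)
We will prove this by double counting vertex–hyperedge incidences, splitting vertices according to how many hyperedges contain them. For a vertex $x$ among the $n$ spanned vertices, let $d(x) = |\{ i : x \in F_i\}| \geq 1$. Then
\[ \sum_i |F_i| = \sum_x d(x). \]
Call $x$ \emph{shared} if $d(x) \geq 2$ and \emph{private} if $d(x) = 1$. Let $S$ be the set of shared vertices. The hypothesis says that each $F_i$ contains at least $s$ shared vertices, since $F_i \cap \bigcup_{j \neq i} F_j$ is exactly the set of shared vertices lying in $F_i$.

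Counting incidences between the $r$ hyperedges and shared vertices gives
\[ \sum_{x \in S} d(x) = \sum_i |F_i \cap S| \geq rs. \]
Next, for $d \geq 2$ we have $d - 1 \geq d/2$. Hence
\[ \sum_x (d(x) - 1) = \sum_{x \in S} (d(x)-1) \geq \tfrac12 \sum_{x \in S} d(x) \geq rs/2. \]
Since the left side equals $\sum_i |F_i| - n$, this gives $\sum_i |F_i| \geq n + rs/2$, as desired.

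There is no real obstacle here; the only step needing care is the inequality $d - 1 \geq d/2$ for shared vertices. This is exactly where the factor $1/2$ comes from, and it is tight when every shared vertex lies in exactly two hyperedges. It also handles degenerate cases automatically: if $r=1$ or $s=0$, the bound reduces to $\sum_i |F_i| \geq n$, which holds because every spanned vertex has $d(x) \geq 1$.
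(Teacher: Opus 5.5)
Your proof is correct and is essentially the paper's argument. The paper phrases it as discharging: each vertex keeps one unit of its degree-charge and sends $(d-1)/d \geq 1/2$ to each incident hyperedge. That is the same double count of incidences, resting on the same inequality $d-1 \geq d/2$ for vertices of degree at least two.
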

\begin{proof}
We prove this by discharging.
The initial charge $c_1: V(\mathcal{H}) \cup E(\mathcal{H}) \rightarrow \mathbb{R}$ for each vertex is equal to its degree and for each hyperedge is zero. 
During the discharging phase each vertex keeps one charge for itself, and gives any remaining charge uniformly among its incident edges.
Each vertex with degree at least two gives at least $0.5$ charge to each incident edge.
So the final charge $c_2: V(\mathcal{H}) \cup E(\mathcal{H}) \rightarrow \mathbb{R}$ for each vertex is one and for each hyperedge is at least $s/2$.
So we conclude that
$$ \sum_i |F_i| = \sum_v d(v) = \sum_{x \in V \cup E} c_1(x) = \sum_{x \in V \cup E} c_2(x) \geq n + r s /2.$$
\end{proof}

\begin{theorem}\label{problematic partitioning}
If $k$ is positive intergral and $\epsilon \in [0, 1)$, then a $(k + \epsilon, 0)$-sparse graph can be partitioned into a forest and a $(k, 1-f(k,\epsilon))$-sparse graph.
\end{theorem}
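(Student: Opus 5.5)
The plan is to use the matroid-union framework of Section \ref{sec: partitioning}, with $\mathcal{I}_1$ the forests ($(1,-1)$-sparse graphs) and $\mathcal{I}_2$ the $(k,1-f)$-sparse graphs, where $f=f(k,\epsilon)$. Since $1\le f\le 2k$, we have $1-2k\le 1-f\le 0$, so both are matroids by Lorea's theorem. It then suffices to show $r_1(B)+r_2(B)\ge |B|$ for every edge set $B$ of a $(k+\epsilon,0)$-sparse graph $G$.

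\textbf{Easy regime.} First I would dispose of the case $f\le k+1$, which covers $\epsilon\ge 1/2$ and part of $\epsilon<1/2$. There, Lemma \ref{lemma defining f} says $G$ is $(k+1,-f)$-sparse. Writing $-f=(-1)+(1-f)$ with $-1\le -1\le 0$ and $-k\le 1-f\le 0$, Theorem \ref{basic modularity theorem} applies directly. So the real content is the range $\epsilon<1/2$ with $f>k+1$, which is exactly where Counterexample \ref{ring example} shows $(k+1,-f)$-sparsity alone cannot suffice. In that range I must use the full strength of $(k+\epsilon,0)$-sparsity.

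\textbf{Structure of a minimal counterexample.} For the hard regime I would take a minimal $B$ with $r_1(B)+r_2(B)<|B|$.
\begin{itemize}
\item As in Theorem \ref{basic modularity theorem}, $B$ is connected, since both $b_i\le 0$. Hence $r_1(B)=n-1$, where $n$ is the number of vertices spanned by $B$.
\item Let $B_2$ be a basis of $\mathcal{I}_2$ inside $B$, and let $T_1,\dots,T_r$ be the maximal $(k,1-f)$-tight subgraphs of $B_2$, with vertex sets $F_i$.
\item Use submodularity of $\rho_2(J)=k\,v(J)-e(J)$. If two tight subgraphs share at least two vertices, their intersection has potential at least $\min\{f-1,2k\}=f-1$, so their union is again tight. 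Hence distinct $T_i$ share at most one vertex; in particular they are edge-disjoint.
\item As in the earlier proof, every edge of $B\setminus B_2$ lies inside some $F_i$.
\item Edges of $B_2$ outside all $T_i$ can be deleted while dropping $r_2$ by exactly one, and removing an edge never drops $r_1$ by more than one. Minimality then lets me assume every edge of $B$ lies in some $B[F_i]$.
\end{itemize}
So $B$ is covered by the "hyperedges" $F_i$, which pairwise share at most one vertex, and $r_2(B)=\sum_i (k|F_i|+1-f)$.

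\textbf{Counting, and the main obstacle.} It remains to prove
\[
n-1+\sum_i \bigl(k|F_i|+1-f\bigr)\ \ge\ \sum_i |B\cap E(F_i)|,
\]
using $|B\cap E(F_i)|\le \lfloor (k+\epsilon)|F_i|\rfloor$ and also $(k+\epsilon,0)$-sparsity of unions of several $B[F_i]$. If $r=1$ this holds because the forest can absorb at most $n-1$ edges and the bound matches the Case 1 computation in Lemma \ref{lemma defining f}; I would redo that computation per piece. For $r\ge 2$, connectivity gives each $F_i$ at least one vertex shared with the others, so Proposition \ref{hypergraph size claim} with $s=1$ yields $\sum_i|F_i|\ge n+r/2$. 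That is how the forest's $n-1$ is compared with the sum over pieces.

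The main obstacle is this final inequality. A naive per-piece bound loses $f-1$ per piece, which for $f$ near $2k$ is not compensated by the $r/2$ gain. I would first try to strengthen the per-piece estimate. A tight piece with $b=1-f<-k$ must be large: the concavity argument of Lemma \ref{lemma defining f} forces $|F_i|\ge 2k+2$. Then $\lceil(1-\epsilon)|F_i|\rceil\ge f$ shows each piece contains at least $f-1$ more $B$-edges-worth of slack than $(k+\epsilon)$-sparsity allows. The hope is that this slack can be charged via the discharging of Proposition \ref{hypergraph size claim} to the forest's $n-1$. If this fails, the fallback is to apply $(k+\epsilon,0)$-sparsity to the whole union $B$ at once, giving $|B|\le \lfloor (k+\epsilon)n\rfloor$, and to bound $\sum_i |F_i|$ from below more aggressively using the fact that the pieces pairwise share at most one vertex.
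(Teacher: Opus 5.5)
Your reduction matches the paper's: the matroid union framework, the easy regime $f\le k+1$ via Theorem~\ref{basic modularity theorem}, a minimal connected $B$, maximal tight pieces of $B_2$ meeting pairwise in at most one vertex, every edge of $B$ inside a piece, and $r=1$ via Lemma~\ref{lemma defining f}. But the argument stops where the real work begins, and the tool you propose for the final inequality is false. A $(k,1-f)$-tight piece need not have $2k+2$ vertices. The concavity argument in Lemma~\ref{lemma defining f} concerns the potential $(k+1)v-e$. For $\rho_2=kv-e$, a tight piece has $\rho_2=f-1\le 2k-1$, and concavity only yields $|F_i|\ge 2k-1$: for $k\ge 3$, $\rho_2(K_t)\ge 3k-3>2k-1$ when $3\le t\le 2k-2$. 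This bound is attained: if $f=2k$, then $K_{2k-1}$ is $(k,1-2k)$-tight and $(k+\epsilon,0)$-sparse. So $\lceil(1-\epsilon)|F_i|\rceil\ge f$ fails, and the per-piece charging cannot be carried out. Your fallback (global sparsity) points the right way. However, ``pieces share at most one vertex'' bounds overlaps from above, so it gives no lower bound on $\sum_i|F_i|$. With only connectivity ($s=1$, i.e.\ $\sum_i|F_i|\ge n+r/2$), the computation below degrades to roughly $\tfrac{k-5}{2}r-1$, which is too weak for small $k$.

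The missing idea is that the minimal $B$ is $2$-connected. Suppose a vertex $x$ split $B$ into two nonempty sides. Minimality partitions each side into a forest and a $(k,1-f)$-sparse graph, and the forests glue to a forest. The two sparse parts meet in at most the isolated vertex $x$, which has potential at most $k$. Hence subgraphs $J_1,J_2$ with edges on the two sides satisfy $\rho_2(J_1\cup J_2)\ge 2(f-1)-k\ge f-1$, and this holds exactly because $f>k+1$. With $2$-connectivity, Proposition~\ref{hypergraph size claim} applies with $s=2$, giving $n':=\sum_i|F_i|-n\ge r$ (and $r\ge 3$ once $r\ge 2$, since pieces meet pairwise in at most one vertex). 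Now combine this with the \emph{global} bound $|B|\le (k+\epsilon)n$, with $f\le (2k+2)(1-\epsilon)$, and with $(2k-1)r\le n+n'$:
\[
|B_2|+n-1-|B|\ \ge\ (k-1)n'-2r-1+\epsilon(n'+3r)\ \ge\ (k-3)r-1 .
\]
This settles $k\ge 4$. For $k\in\{2,3\}$ one redoes the computation with the exact value of $f$, using that the left side is an integer.
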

\begin{proof}
Suppose $G$ is $(k+\epsilon,0)$-sparse.
We apply Lemma \ref{lemma defining f} to say that $G$ is also $(k+1,-f(k,\epsilon))$-sparse.
We are done by Theorem \ref{basic modularity theorem} if $f(k,\epsilon) \leq k+1$, so assume $f(k, \epsilon) > k+1$.
In particular we have that $\epsilon < 1/2$ and so $f(k, \epsilon) \leq (2k+2)(1-\epsilon)$.
Also, because $f(k,\epsilon) \leq 2k$ by definition, we have that $k \geq 2$. 

We repeat the opening arguments in the proof to Theorem \ref{basic modularity theorem}.
Recall that $B$ is a minimal set of edges in $G$ such that $|B| > r_1(B) + r_2(B)$, $H$ is the subgraph with $E(H) = B$ and $V(H) = W$, $B_1$ is a maximal forest contained in $B$, and $B_2$ is a maximum sized subgraph contained in $B$ that is $(k,1-f(k,\epsilon))$-sparse.
The only place where $-b_i \leq a_i$ is used is to prove that the union of two $(a_i, b_i)$-tight subgraphs with nontrivial intersection is again a tight subgraph, and so everything before that still holds.
In particular, we have that $H$ is connected, so $B_1$ is a spanning tree, which implies $r_1(B) = |W| - 1$.
The goal for the rest of this proof is to contradict the choice of $B$ by showing that $|B_2| \geq |B| - |W| + 1$.

We claim that $H$ is $2$-connected.
By way of contradiction, suppose $W = W_1 \cup W_2$ with $W_1 \cap W_2 = \{x\}$ and $B = E(H[W_1]) \cup E(H[W_1])$ with each $H[W_i]$ being nonempty.
The minimality of $B$ implies that $H[W_i]$ can be partitioned into $F_i \cup G^{(i)}$, where $F_i$ is a forest and $G^{(i)}$ is $(k, 1-f(k,\epsilon))$-sparse.
By construction $F = F_1 \cup F_2$ is also a forest.
Let $\rho_2(J) = k v(J) - e(J)$ be the potential function such that $(k, 1-f(k,\epsilon))$-sparsty is equivalent to $\rho_2(J) \geq f(k, \epsilon) -1$ for all nonempty subgraphs $J$.
By construction of $W_1, W_2$, it follows that $G^{(1)} \cap G^{(2)}$ is empty or the isolated vertex $x$, and therefore it has potential $0$ or $k$.
Let $J_i$ be an arbitrary nonempty subgraph of $G^{(i)}$; submodularty and $f(k, \epsilon) > k+1$ imply that 
$$ \rho_2( J_1 \cup J_2) \geq \rho_2(J_1) + \rho_2(J_2) - \rho_2(J_1 \cap J_2) \geq 2(f(k,\epsilon)-1) - k \geq f(k, \epsilon)-1. $$
Therefore $G^{(1)} \cup G^{(2)}$ is $(k, 1-f(k,\epsilon))$-sparse, which contradicts that $B$ does not have a partition.
This proves the claim.

Let $T_1, \ldots, T_r$ be the set of maximal subgraphs of $B_2$ that are $(k, 1-f(k, \epsilon))$-tight.
As in the proof to Theorem \ref{basic modularity theorem}, by the maximality of $B_2$, every edge in $B \setminus B_2$ is contained in a $T_i$ for some $i$.
So every edge not in some $T_i$ is in $B_2$.
Using submodularity, if $T_i \cap T_j$ contains at least two vertices, then $T_i \cup T_j$ is also $(k, 1-f(k, \epsilon))$-tight, which contradicts their maximality.
So every edge of $B$ is contained in one or zero $T_i$.

For any $J \subseteq B_2 \subset B$, we have that $|B_2| \geq |B| - |W| + 1$ if and only if $|B_2 \setminus J| \geq |B \setminus J| - |W| + 1$.
We will choose $J$ to be the edges that are not in a $T_i$ or in a $T_i$ that is isomorphic to $K_2$.
Note that if $T_i$ has order at least three, then $T_i$ is still tight after the removal of $J$, as the tight subgraphs are edge-disjoint.
If $T_i$ is isomorphic to $K_2$, then because $G$ is simple, it contains no edge of $B \setminus B_2$, so every edge of $B \setminus B_2$ is in a tight subgraph of $B_2 \setminus J$.
Thus, without loss of generality, we can assume that every edge of $B$ is contained by some unique $T_i$ and that $T_i$ is a $(k, 1-f(k, \epsilon))$-tight subgraph of $B_1$ with order at least three.

Let $n_i = v(T_i)$; by definition $e(T_i) = k n_i + 1 - f(k, \epsilon)$.
Let $n = |W|$ and $n' = \sum_{i} n_i - n$, so $|B_2| = k (n + n') + r (1 - f(k, \epsilon))$.
By Claim \ref{hypergraph size claim} and the $2$-connectivity of $H$, we have $n' \geq r$.
As $H$ is $(k+\epsilon, 0)$-sparse, we have that $|B| \leq (k + \epsilon) n$.

We claim that $n_i \geq 2k-1$ for each $i$, which implies that $(2 k - 1) r \leq n + n'$.
As we already established that $n_i \geq 3$ for each $i$, we may assume $k \geq 3$ for proving the claim.
By definition, we have $\rho_2(T_i) = f(k,\epsilon)-1 \leq 2k-1$.
Observe that $\rho_2(K_2) = 2k-1$ and $\rho_2(K_{2k-2}) = 3k-3 > 2k-1$.
As $\rho_2(K_n)$ is quadratic and concave function of $n$ and $n_i \geq 3$, this implies that $T_i$ is not a subgraph of $K_n$ for $n \leq 2k-2$.
By integrality, this proves the claim.

Using the all of the above inequalities, we have 
\begin{eqnarray*}
|B_2| - |B| + |W| - 1	& =	& k (n + n') + r (1 - f(k, \epsilon)) - | B| + n - 1 \\
			& \geq 	& k(n + n') + r(1 - (2k+2)(1-\epsilon)) - (k + \epsilon)n + n - 1\\
			& = 	& (1 - \epsilon)(n - (2 k - 1) r) + kn' + r(1 - 3 (1-\epsilon)) - 1 \\
			& \geq	& (1-\epsilon)(-n') + kn' + r(1 - 3(1-\epsilon)) -1 \\
			& =	& (k-1)n' - 2r - 1 + \epsilon(n' + 3r) \\
			& \geq 	& (k-1)n' - 2r - 1 \\
			& \geq 	& (k-3)r - 1.\\
\end{eqnarray*}			
If $k > 3$, then we have finished the proof to the theorem.
So all that is left is to repeat the above calculation after using $k \in\{2,3\}$ to establish a tighter bound on $f(k, \epsilon)$ than $(2k+2)(1-\epsilon)$.

\textbf{Case $k=3$:}
As $f(k,\epsilon) > k+1 = 4$ and $f(k, \epsilon) \leq 2k = 6$, we have that $f(k, \epsilon) \in \{5,6\}$.
By definition, $f(3, \epsilon) = 5$ when $1/4 < \epsilon < 3/8$ and $f(3, \epsilon) = 6$ when $\epsilon \leq 1/4$.

First let us consider $1/4 < \epsilon < 3/8$ so 
\begin{eqnarray*}
|B_2| - |B| + |W| - 1	& \geq	& 3 (n + n') + r (1 - 5) - (3 + \epsilon)n + n - 1 \\
			& = 	& (1-\epsilon)(n+n') + (2+\epsilon)n'  -4 r - 1\\
			& \geq  & (1-\epsilon)(5r) + (2+\epsilon)n' - 4r - 1 \\
			& \geq	& (3 - 4\epsilon) r - 1 \\
			& \geq 	& (3/2)(3) - 1 > 0.
\end{eqnarray*}	
So now consider $\epsilon \leq 1/4$ so 
\begin{eqnarray*}
|B_2| - |B| + |W| - 1	& \geq	& 3 (n + n') + r (1 - 6) - (3 + \epsilon)n + n - 1 \\
			& = 	& (1-\epsilon)(n+n') + (2+\epsilon)n'  - 5 r - 1\\
			& \geq  & (1-\epsilon)(5r) + (2+\epsilon)n' - 5 r - 1 \\
			& \geq	& (2 - 4\epsilon) r - 1 \\
			& \geq 	& (1)(3) - 1 > 0.
\end{eqnarray*}	

\textbf{Case $k=2$:}
As $f(k,\epsilon) > k+1 = 3$ and $f(k, \epsilon) \leq 2k = 4$, we have that $f(k, \epsilon) = 4$.
By definition, $f(2, \epsilon) = 4$ when $\epsilon \leq 1/3$.
So we have
\begin{eqnarray*}
|B_2| - |B| + |W| - 1	& \geq	& 2 (n + n') + r (1 - 4) - (2 + \epsilon)n + n - 1 \\
			& = 	& (1-\epsilon)(n+n') + (1+\epsilon)n'  -3 r - 1\\
			& \geq  & (1-\epsilon)(3r) + (1+\epsilon)n' - 3r - 1 \\
			& \geq	& (1 - 2\epsilon) r - 1 \\
			& \geq 	& (1/3)(3) - 1 \geq 0.
\end{eqnarray*}	
\end{proof}

\section{Small $m$}\label{sec: small m}

\begin{remark}\label{from k to m}
If $G$ is $(m', 1-2m')$-sparse for some $m' \leq m$, then $G$ is $(m, 1-2m)$-sparse.
\end{remark}
\begin{proof}
When $v \geq 2$, $m' v + 1-2m' \leq m v + 1 - 2m$.
\end{proof}

\begin{lemma}\label{forest and sparse pseudoforest}
If $G$ is $(2,-1)$-sparse, then we can partition $G = F \cup P$, where $F$ is a forest and $P$ is a triangle-free pseudoforest.
\end{lemma}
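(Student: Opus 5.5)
We first obtain a forest--pseudoforest partition and then remove triangles from the pseudoforest by exchanges. Since $G$ is $(2,-1)$-sparse, Theorem \ref{partitioning theorem}(A) with $(a_1,b_1)=(1,-1)$ and $(a_2,b_2)=(1,0)$ partitions $E(G)=E(F)\cup E(P)$ with $F$ a forest and $P$ a pseudoforest. Among all such partitions choose one minimizing the number of triangles in $P$. Among those, choose one maximizing $e(F)$, breaking further ties by a secondary potential such as the number of components of $F$. Suppose $P$ still contains a triangle $T=xyz$. Then $T$ is the unique cycle of its component $Q$ of $P$. The goal is to derive a contradiction by exchanging edges between $F$ and $P$, in the spirit of a matroid base exchange; both the graphic matroid and the bicircular (pseudoforest) matroid are available by Lorea's result.

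\textbf{Step 1.} If some edge $e\in E(T)$ has $F+e$ acyclic, move $e$ to $F$. This destroys $T$, creates no new triangle in $P-e$, and contradicts minimality. So $x,y,z$ lie in a single tree $F_0$ of $F$. Each edge $e$ of $T$ then closes a unique cycle $C_e$ in $F+e$.

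\textbf{Step 2.} For any $f\in C_e$ the graph $F-f+e$ is a forest. The graph $P-e$ turns $Q$ into a tree $Q-e$, so $P-e+f$ fails to be a pseudoforest only if $f$ joins two components of $P-e$ that both contain cycles, or if $f$ lies inside a component of $P-e$ that already has a cycle. The exchange must also avoid creating a new triangle. Since $f$ is the only new edge in $P$, a new triangle must contain $f$ together with two $P$-edges forming a path of length two between the ends of $f$. If some $f\in C_e$ is admissible (keeps $P-e+f$ a pseudoforest and creates no triangle), the number of triangles drops, a contradiction. So every $f\in C_e$, for every $e\in E(T)$, is blocked.

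\textbf{Step 3.} Build the standard augmenting-path closure, as in the matroid partition algorithm. Start from $X_0=V(T)$ and repeatedly add vertices spanned by blocked exchange cycles, together with the cycle-containing $P$-components they reach. This yields a vertex set $U$ with the following properties: $F[U]$ is a spanning tree of $U$ (so $|U|-1$ edges), and every component of $P[U]$ is unicyclic and spanning within $U$ (so $|U|$ edges). That gives $e(G[U])\ge 2|U|-1$, which is exactly tight for $(2,-1)$-sparsity. The contradiction must come from one additional edge. I expect to extract it from the triangle-blocking case: if a swap $f=uv$ is blocked only because it would form a triangle $uvw$ with $P$-edges $uw,wv$, then the exchange sequence can be continued by moving $uw$ or $wv$ to $F$ instead. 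Alternatively one can show that $G[U]$ together with such a vertex $w$, or the chord structure around $T$, yields a subgraph $H$ with $e(H)\ge 2v(H)$, contradicting $(2,-1)$-sparsity.

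The main obstacle is Step 3. Pure matroid exchange handles the pseudoforest condition, and tightness arguments as in the proof of Theorem \ref{basic modularity theorem} cover the case where every $C_e$ is blocked by cycle conditions. The triangle-free condition, however, is not matroidal, so blocked swaps that would create a new triangle need separate treatment. My first attempt would be a careful case analysis: either such a blocking triangle can be broken by a secondary swap that strictly improves the lexicographic potential, or the union of $T$, the blocking triangles and the relevant $F$-paths is dense enough to violate $(2,-1)$-sparsity. For the latter I would use submodularity of $\rho(J)=2v(J)-e(J)$ to glue the tight pieces.
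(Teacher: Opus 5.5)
Your Steps 1 and 2 agree with the paper's opening. Step 3, which you yourself flag as the obstacle, contains the entire content of the proof, and what you give there is a plan rather than an argument. The closure set $U$ with $e(G[U])\ge 2|U|-1$ only reproduces the tightness that matroid union already predicts, so it cannot produce a contradiction. Neither of your escape routes (a secondary swap improving a lexicographic potential, or a subgraph with $e(H)\ge 2v(H)$) is carried out. The second is also aimed at the wrong target: the configuration you are actually driven to is a $K_4$ inside $P$. Since $K_4$ is $(2,-1)$-sparse, the contradiction has to come from $P$ being a pseudoforest, not from the sparsity of $G$.

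Two ideas close the gap. First, locality. Since $Q-e$ is a tree, any edge $f$ of the $F$-path $C_e-e$ with exactly one end in $V(Q)$ is an admissible swap: it joins a tree to a component with at most one cycle, and it cannot close a triangle because its ends lie in different components of $P-e$. That path starts and ends in $V(Q)$, so it lies entirely inside $V(Q)$. Consequently $P-e+f$ is automatically a pseudoforest, and each $f$ is blocked \emph{only} by a new triangle, i.e.\ by a common neighbor $w_f\in V(Q)$ of its ends in $P-e$. So the cycle-blocked exchanges your closure is built around never arise. Second, an apex argument. Write the path as $z_1z_2\cdots z_t$ with $e=z_1z_t$, let $v$ be the vertex of $T$ opposite $e$, and set $w_i=w_{z_iz_{i+1}}$. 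Then $v,z_1,w_1,z_2,\dots,w_{t-1},z_t,v$ is a closed walk in the tree $Q-e$. In it the $z_i$ are distinct and lie in the color class opposite to $v$ and the $w_i$. Suppose $w_1=\dots=w_{i-1}=v$ but $w_i\neq v$. Then the walk leaves $z_i$ into a branch avoiding $v$, and can only return by re-entering $z_i$ from some $w_j$ with $j\ge i$, which forces the repetition $z_{j+1}=z_i$. Hence every $w_i=v$, so every vertex of the $F$-path joining the ends of $e$ is a $P$-neighbor of $v$. Apply this to all three edges of $T$. The median $c$ of $x,y,z$ in the tree $F_0$ lies on all three paths, so $c\notin\{x,y,z\}$ (no vertex is its own neighbor) and $c$ is $P$-adjacent to $x$, $y$ and $z$. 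Then $P[\{x,y,z,c\}]$ has $6$ edges on $4$ vertices, contradicting that $P$ is a pseudoforest. This is how the paper argues: a single exchange always removes a triangle, and no augmenting chains or tie-breaking potentials are needed.
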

\begin{proof}
We will call a partition $G = F \cup P$ ``valid'' if $F$ is $(1,-1)$-sparse and $P$ is $(1, 0)$-sparse.
By Theorem \ref{partitioning theorem} a valid partition exists, and we expect many to exist.
Our aim is to prove there exists a valid partition such that $P$ contains no triangles.
We will do this by replacing $F, P$ with valid partition $F', P'$ that contains one fewer triangle; the lemma follows from repeating this process until no triangles remain.

Suppose $P[W] \cong K_3$ for $W = \{x, y, z\}$.
For each $v \in W$, let $e_v$ denote the edge of $P$ whose endpoints are $W \setminus \{v\}$.
If the endpoints of any edge $e_v$ are in different components of $F$, then $F' = F + e$ and $P' = P - e$ suffices.  
So there exists a path $Q_v \subseteq F$ whose endpoints are $e_v$.

It is a property of forests that for any edge $e \in Q_v$, we have that $F - e + e_v$ is a forest.
If $P - e_v + e$ is a pseudoforest with fewer triangles than $P$, then we are done by setting $F' = F - e + e_v$ and $P' = P - e_v + e$.
So for every $e \in Q_v$ we have that adding $e$ to $P-e_v$ creates (1) a component with at least two cycles or (2) a new triangle.

Let $C$ denote the component of $P$ that contains $W$.
Observe that $P[C]-e_v$ is a tree.
So if $e$ has exactly one endpoint in $C$, then adding $e$ to $P-e_v$ does not do (1) or (2) above.
Therefore $e$ has both or neither endpoints in $C$.
As both endpoints of $Q_v$ are in $C$, it follows that $P_v \subseteq C$.
As $P[C]-e_v$ is a tree, there are no edges $e \in Q_v$ that do (1).
So every edge $e \in Q_v$ does (2), which is to say that there exists a vertex $w_e \in C$ such that $e \subseteq N_P(w_e)$.

Let $Q_v = z_1 z_2 \ldots z_t$ where $\{z_1, z_t\} \subset W$, and let $x_i = w_{z_i z_{i+1}}$.
Consider the closed walk $v z_1 x_1 z_2 x_2 \ldots x_{t-1} z_t v$, which does not contain the edge $e_v$ but is contained in $P$.
As each component of $P$ contains at most one cycle, and that cycle in $C$ is $xyz$, the above walk does not contain a cycle.
As the $z_i$ do not repeat because they form the path $Q_v$, it follows that $x_1 = x_2 = \cdots = x_{t-1} = v$.
Therefore $Q_v \subseteq N_P(v)$.

As $F$ is a forest, it follows that $Q_x \cup Q_y \cup Q_z$ forms a subdivided $K_{1,3}$.
As each $v \in W$ is adjacent to everything in two of the three subdivided edges of the subdivided $K_{1,3}$, the elements of $W$ have at least one common neighbor outside of $W$.
Let $u \notin W$ such that $|N_P(u) \cap W| \geq 2$; the subgraph $P[\{x,y,z,u\}]$ has at least $5$ edges and $4$ vertices and therefore is not $(1,0)$-sparse, which contradicts our choice of $P$.
\end{proof}

\begin{lemma}\label{small m theorem}
Theorem \ref{main theorem} is true when $1 < m < 2$.
\end{lemma}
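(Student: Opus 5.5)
The plan is to split at $m = 3/2$. The threshold comes from the fact that a cycle of length $L$ has $m_2 = (L-1)/(L-2)$: cycles of length at least $4$ are harmless only once $m \geq 3/2$. Throughout, $(m,1-2m)$-sparsity is checked on subgraphs with at least two vertices, in line with the definition of $m_2$. Under this reading every forest is $(m,1-2m)$-sparse for $m>1$, since $e \leq v-1 \leq m(v-2)+1$ is equivalent to $(m-1)(v-2) \geq 0$.

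\textbf{Case $1 < m \leq 3/2$.} Here I would show that $G$ is $(2,-2)$-sparse. Let $H$ be a subgraph on $v$ vertices.
\begin{itemize}
\item If $v=1$, then $e=0$.
\item If $v = 2$, then $e \leq 1 \leq 2v-2$.
\item If $v=3$, then $e \leq 3 \leq 4 = 2v-2$.
\item If $v \geq 4$, then $e \leq mv \leq 3v/2 \leq 2v-2$.
\end{itemize}
By the Nash-Williams theorem, which is the case $a_1=a_2=1$, $b_1=b_2=-1$ of Theorem \ref{partitioning theorem}(A), $G$ splits into two forests $F$ and $G'$. By the remark above, $G'$ is $(m,1-2m)$-sparse.

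\textbf{Case $3/2 < m < 2$.} First I would check that $G$ is $(2,-1)$-sparse. For a nonempty subgraph, $e \leq \lfloor mv \rfloor$, and since $mv < 2v$ this gives $e \leq 2v-1$ by integrality. Lemma \ref{forest and sparse pseudoforest} then gives $G = F \cup P$ with $F$ a forest and $P$ a triangle-free pseudoforest. It remains to show that $P$ is $(3/2,-2)$-sparse, because Remark \ref{from k to m} with $m' = 3/2 \leq m$ then makes $P$ $(m,1-2m)$-sparse.

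To verify this, take a subgraph $J$ of $P$ with $v \geq 2$ vertices. Each component of $J$ has at most one cycle, so I would argue componentwise.
\begin{itemize}
\item If $J$ is acyclic, then $e \leq v-1 \leq \tfrac32 v - 2$ for $v \geq 2$.
\item If $J$ contains a cycle, that cycle has length at least $4$ by triangle-freeness, so the component containing it has at least $4$ vertices. A component with $v_i$ vertices has $e_i \leq v_i$ edges, with $e_i = v_i$ only when $v_i \geq 4$, and $e_i \leq v_i - 1$ otherwise.
\end{itemize}
I would check the bound $e \leq \tfrac32 v - 2$ directly for connected $J$: $v \leq \tfrac32 v - 2$ holds exactly when $v \geq 4$. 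For disconnected $J$, I would combine components using $(3/2)\cdot(\text{extra vertices}) \geq 2$ per additional component. The cleanest route is to restrict to connected $J$ from the start. This is justified because the $-2$ constant is negative, so the union of disjoint $(3/2,-2)$-sparse pieces is again $(3/2,-2)$-sparse, exactly as in the opening of the proof of Theorem \ref{basic modularity theorem}.

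The main subtlety is not the combinatorics, which Lemma \ref{forest and sparse pseudoforest} already handles. It is bookkeeping: confirming the convention on which subgraphs count (isolated vertices would violate $0 \leq 1-m$), and checking that the boundary value $m = 3/2$ is covered by the first case.
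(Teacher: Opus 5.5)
Your proof is correct and follows the paper's route. When $G$ is $(2,-2)$-sparse you split it into two forests via Theorem~\ref{partitioning theorem}(A); otherwise you apply Lemma~\ref{forest and sparse pseudoforest} and check that a triangle-free pseudoforest is $(m,1-2m)$-sparse. The only difference is that you split at $m=3/2$ where the paper splits at $m=1.8$. This does not matter: $(m,0)$-sparsity gives $(2,-2)$-sparsity for all $m<1.8$, and the pseudoforest bound holds for all $m\ge 3/2$, so any split point between $3/2$ and $1.8$ works.
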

\begin{proof}
Suppose $G$ is $(m,0)$-sparse, where $1< m< 2$.
By Lemma \ref{lemma defining f}, $G$ is $(2,-1)$-sparse, and $(2,-2)$-sparse if $m<1.8$.
So if $m < 1.8$, then by Theorem \ref{partitioning theorem} there exists a partition $G = F \cup F'$, where each of $F$ and $F'$ is $(1,-1)$-sparse.
By Remark \ref{from k to m}, we are done by setting $G' = F'$.

So now assume $1.8 \leq m < 2$ and $G$ is $(2,-1)$-sparse.
Let $G = F \cup P$ be the partition given by Lemma \ref{forest and sparse pseudoforest}.
We claim that $P$ is $(m, 1-2m)$-sparse, which will finish the proof.

By the lemma we have that $P$ is $(1,0)$-sparse and triangle-free.
Let $H$ be a subgraph of $P$, and let $v = |V(H)|$ and $e = |E(H)|$.
Being triangle-free means that if $v=3$, then $e \leq 2$.
Thus, it suffices to prove that $mv - e - 2m + 1\geq 0$ when $v \geq 4$.
As $P$ is $(1,0)$-sparse we have that $e \leq v$, so 
$$mv - e - 2m + 1\geq (m-1)v - 2m + 1\geq 2m - 3 \geq 3.6 -3 > 0.$$
\end{proof}


\section{Large $m$}\label{large m section}

Recall $m = k + \epsilon$, where $k$ is integral and $0 \leq \epsilon < 1$.

\begin{lemma}\label{brooks-like decomposition}
If $G$ is $(k+1,-s)$-sparse with $1 \leq s \leq k-1$, then we can partition $G = F \cup G'$, where $F$ is a forest, $G'$ is $(k,1-s)$-sparse, and every subgraph $H'$ of $G'$ spanning exactly $2k+1$ vertices contains at most $(2k+1)k + s$ edges.  
\end{lemma}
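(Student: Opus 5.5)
The plan is to get the partition from Theorem \ref{basic modularity theorem} and then read off the extra edge bound on $(2k+1)$-vertex subgraphs directly from the sparsity of $G'$. No exchange argument is needed for the statement as worded, because the stated bound $(2k+1)k+s$ is weaker than what $(k,1-s)$-sparsity already gives.

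\textbf{Step 1 (the partition).} Write $k+1 = a_1 + a_2$ with $a_1 = 1$, $b_1 = -1$, and $a_2 = k$, $b_2 = 1-s$, so that $b_1 + b_2 = -s$. The hypotheses of Theorem \ref{basic modularity theorem} require integral $a_i, b_i$ with $a_i \geq -b_i \geq 0$.
\begin{itemize}
\item For $i=1$ this reads $1 \geq 1 \geq 0$, which holds.
\item For $i=2$ it reads $k \geq s-1 \geq 0$, which holds because $1 \leq s \leq k-1$.
\end{itemize}
Since $G$ is $(k+1,-s)$-sparse, the theorem yields a partition $E(G) = E(F) \cup E(G')$ with $F$ being $(1,-1)$-sparse, i.e.\ a forest, and $G'$ being $(k,1-s)$-sparse. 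This gives the first two conclusions.

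\textbf{Step 2 (the edge bound).} Let $H'$ be any subgraph of $G'$ spanning exactly $2k+1$ vertices; it is nonempty. By $(k,1-s)$-sparsity of $G'$,
$$ e(H') \leq k(2k+1) + 1 - s \leq (2k+1)k + s, $$
where the second inequality uses $s \geq 1$, so $1-s \leq 0 < s$. Independently, since $G$ is simple, $e(H') \leq \binom{2k+1}{2} = (2k+1)k$, which also lies below the stated bound. Either route gives the third conclusion, so the partition from Step 1 already has all required properties.

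\textbf{Main obstacle.} The only point needing care is Step 1, namely confirming that the pair $(k,1-s)$ lies in the regime $-a_2 \leq b_2 \leq 0$ of Theorem \ref{partitioning theorem}(A). This is exactly where $1 \leq s \leq k-1$ is used: $s \geq 1$ gives $b_2 \leq 0$ (and is what makes the Step 2 comparison with $+s$ go through), and $s \leq k-1$ gives $b_2 \geq -k$.

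I expect the lemma will later be invoked with a sharper count in mind. If so, that refinement would come from a base-exchange argument like the one in Lemma \ref{forest and sparse pseudoforest}: swap a forest edge on a dense $(2k+1)$-set with a $G'$ edge there. That argument is not required for the bound as stated here.
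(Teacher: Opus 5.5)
Your Steps 1 and 2 are valid for the lemma exactly as printed. Theorem \ref{basic modularity theorem} with $(a_1,b_1)=(1,-1)$ and $(a_2,b_2)=(k,1-s)$ gives the partition, and the third clause is then automatic, indeed vacuous, since $\binom{2k+1}{2}=(2k+1)k$. That vacuity signals that the printed $+s$ is a typo for $-s$. The paper's proof explicitly aims to show that every $(2k+1)$-vertex subgraph of $G'$ has at most $(2k+1)k-s$ edges. That is one fewer than the $(2k+1)k+1-s$ that $(k,1-s)$-sparsity provides, and Subcase D.2 of Lemma \ref{main theorem for large m} uses precisely this extra edge. Note also that your argument only needs $s\le k+1$ (that is all $-b_2\le a_2$ asks), so the hypothesis $s\le k-1$ would be idle. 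For the intended statement there is a real gap. An arbitrary valid partition can contain a $(2k+1)$-set $U$ with $e(G'[U])=(2k+1)k+1-s$, and nothing in your Steps 1--2 excludes it.

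The missing idea is the exchange argument. As worded, your sketch (swap a forest edge on $U$ with a $G'$ edge there) would leave $e(G'[U])$ unchanged; the forest edge moved into $G'$ must leave $U$. The paper's construction runs as follows.
\begin{itemize}
\item Take an edge $e=xy$ of $G'[U]$ with $y$ meeting no edge of $F[U]$. This is possible because $F[U]$ has at most $\binom{2k+1}{2}-\bigl((2k+1)k+1-s\bigr)=s-1\le k-2$ edges.
\item If $x,y$ lie in different components of $F$, just move $e$ into $F$.
\item Otherwise let $f$ be the edge of the $F$-path from $x$ to $y$ incident with $y$. Then $f\not\subseteq U$, $F+e-f$ is a forest, and $\widehat\rho(U)$ rises by one, where $\widehat\rho(W)=k|W|-e(G'[W])$.
\end{itemize}
The substantive step is showing that no sparsity violation and no new tight $(2k+1)$-set is created. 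If $\widehat\rho(W)$ drops, then $f\subseteq W$ and $e\not\subseteq W$, so $y\in W$ and $x\notin W$. Hence $W\cap U$ is a nonempty set of at most $2k$ vertices, and it has potential at least $k$ by concavity of $kt-\binom{t}{2}$. Submodularity with $\widehat\rho(U)=s-1$ and $\widehat\rho(W\cup U)\ge s-1$ then gives $\widehat\rho(W)\ge k\ge s+1$ before the swap, so at least $s$ after. Iterating strictly shrinks the family of tight $(2k+1)$-sets. Both uses of $s\le k-1$ (the count $s-1\le k-2$ and the inequality $k\ge s+1$) occur here.
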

\begin{proof}
Let $G, k, b$ be as in the statement of the lemma.
We will call a partition $G = F \cup \widehat{G}$ ``valid'' if $F$ is $(1,-1)$-sparse (and therefore a forest) and $\widehat{G}$ is $(k, 1-s)$-sparse.
By Theorem \ref{partitioning theorem} a valid partition exists, and we expect many to exist.
Our aim is to prove there exists a valid partition that also satisfies that every subgraph $\widehat{H}$ of $\widehat{G}$ spanning exactly $2k+1$ vertices contains at most $(2k+1)k - s$ edges.
By being $(k, 1-s)$-sparse, $\widehat{H}$ contains at most $(2k+1)k + 1-s$ edges, and so we need to improve this bound by one.

We will work with a potential function $\widehat{\rho}$ over vertex sets $U$ and subgraphs $\widehat{H} \subseteq \widehat{G}$ as $\widehat{\rho}(U, \widehat{H}) = k |U| - |E(\widehat{H}[U])|$.
As $\widehat{G}$ is $(k,1-s)$-sparse we have that $\widehat{\rho}(U, \widehat{G}) \geq s-1$ for all $U$.
If every $U$ with $|U| = 2k+1$ satisfies $\widehat{\rho}(U, \widehat{G}) \geq s$, then we are done.
Fix a valid partition $F \cup \widehat{G}$ and some $U$ with $|U| = 2k+1$ and $\widehat{\rho}(U, \widehat{G}) = s-1$, and let $\widehat{H} = \widehat{G}[U]$.  

For a valid partition $F' \cup G'$, define $S_{F', G'}$ to be the family of vertex sets $W$ such that $|W| = 2k+1$ and $\widehat{\rho}(G'[W]) = s-1$.
We have that $U \in S_{F, \widehat{G}}$.
The lemma is equivalent to the existence of a valid partition such that $S_{F', G'} = \emptyset$.
We claim that there exists a valid partition $F^* \cup G^*$ such that $S_{F^*, G^*} \subseteq S_{F, \widehat{G}} \setminus \{U\}$.
The lemma would then follow from iteratively applying the claim, and so the rest of this proof is dedicated to proving the claim.

Observe that $\widehat{\rho}(K_1, K_1) = \widehat{\rho}(K_{2k}, K_{2k}) = k$.
The function $kt - {t \choose 2}$ is quadratic and concave in $t$, so every nonempty subgraph with at most $2k$ vertices has potential at least $k$.
As $|U| = 2k+1$, this means every nonempty subgraph of $\widehat{H}$ that is missing at least one vertex has potential at least $k$.

Pick an edge $e = xy \in E(\widehat{G})$.
If $x$ and $y$ are in different components of $F$, then $F^* = F + e$ and $G^* = \widehat{G} - e$ suffices.
So we may assume there exists a path $P \subseteq F$ with endpoints $x$ and $y$.
Let $f$ be the edge in $P$ incident with $y$.
We wish to specifically pick a $y$ such that $f \not\subseteq U$.
As $G$ is simple, the number of edges of $F$ contained in $U$ is at most 
$${|U| \choose 2} - |E(\widehat{H})| = {2k + 1 \choose 2} - (k(2k+1) + 1 -s) = s-1 \leq k-2 . $$
Therefore at most $2k-4$ vertices are incident with an edge in $F[U]$.
As $|U| = 2k+1$, we can use the pigeon hole principle to find an edge $xy$ such that $y$ is not incident with an edge in $F[U]$, and therefore the other endpoint of $f$ is outside $U$.
We will show that the claim follows from setting $F^* = F + e - f$ and $G^* = \widehat{G} + f - e$ for this choice of $e,f$.

We need to show that $F^* \cup G^*$ is a valid partition and that $S_{F^*, G^*} \subseteq S_{F, \widehat{G}} \setminus \{U\}$.
By construction, $F^*$ is a forest.
As $f \not\subseteq U$ and $e \subseteq U$, we have that $\widehat{\rho}(G^*, U) = \widehat{\rho}(\widehat{G}, U) + 1$, and so $U \notin S_{F^*, G^*}$.
So both of the things we wish to show follow from the following statement:
\begin{center}
for any $W$, if $\widehat{\rho}(G^*,W) < \widehat{\rho}(\widehat{G},W)$, then $\widehat{\rho}(G^*,W) \geq s$.
\end{center}
To see why the statement is true, suppose that $W$ is a vertex set with $\widehat{\rho}(G^*,W) < \widehat{\rho}(\widehat{G},W)$.
This implies that $\widehat{\rho}(G^*,W) = \widehat{\rho}(\widehat{G},W)-1$ with $f \subseteq W$ and $e \not\subseteq W$.
In particular, $W \cap \{y,x\} = \{y\}$.
Therefore $\emptyset \neq W \cap U \subseteq U-x$.
By the above, this implies $\widehat{\rho}(\widehat{G},W \cap U) \geq k$.

Applying submodularity we have 
\begin{eqnarray*}
s-1						& \leq		& \widehat{\rho}(\widehat{G}, W \cup U) \\
 				 		& \leq		& \widehat{\rho}(\widehat{G}, W) + \widehat{\rho}(\widehat{G}, U) - \widehat{\rho}(\widehat{G}, W \cap U) \\
						& \leq		& \widehat{\rho}(\widehat{G}, W) + (s-1) - k \\
						& \leq		& \widehat{\rho}(\widehat{G}, W) - 2.
\end{eqnarray*}						
So $\widehat{\rho}(\widehat{G}, W) \geq s+ 1$, and so $\widehat{\rho}(G^*,W) \geq (s+1) - 1 = s$.
This concludes the proof to the lemma.
\end{proof}

\begin{lemma}\label{main theorem for large m}
Theorem \ref{main theorem} is true when $m > 2$.
\end{lemma}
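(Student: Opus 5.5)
The plan is to write $m=k+\epsilon$ with $k\ge 2$ and to reduce the target, $(m,1-2m)$-sparsity of $G'$, to a vertex-count check. For a subgraph $H$ of $G'$ with $v\ge 2$ vertices and $e$ edges we need $e\le m(v-2)+1$. We get this from three bounds on $e$ and use whichever fits the range of $v$:
\begin{itemize}
\item simplicity, $e\le\binom v2$;
\item the sparsity $(k,1-f)$ coming from Theorem \ref{problematic partitioning}, $e\le kv+1-f(k,\epsilon)$;
\item the extra $(2k+1)$-vertex bound of Lemma \ref{brooks-like decomposition}.
\end{itemize}
The first thing to record is that $\binom v2\le m(v-2)+1$ is equivalent to $(v-2)(v+1)\le 2m(v-2)$. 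So every $v\le 2m-1$, and in particular every $v\le 2k-1$, is handled by simplicity alone. The remaining cases are $v\ge 2k$.

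For large $v$, the bound $kv+1-f\le m(v-2)+1$ is equivalent to $\epsilon(v-2)\ge 2k-f$. It is monotone in $v$, so it suffices to check the smallest $v$ in the range, using the piecewise definition of $f$:
\begin{itemize}
\item If $\epsilon(2k+2)<2$, then $f=2k$ and every $v$ works. In that regime Theorem \ref{problematic partitioning} alone finishes, after checking $v=2k,2k+1$ against $f=2k$.
\item If $2/(2k+2)\le\epsilon<1/2$, then $f=\lceil(2k+2)(1-\epsilon)\rceil$. At $v=2k+2$ the inequality reads $2k\epsilon\ge 2k-f$, which follows from $f\ge(2k+2)(1-\epsilon)$.
\item For $\epsilon\ge 1/2$ the analogous check at $v=2k+2$ and $v=2k+3$ should follow from $f=k+1$, respectively $f=\lceil(2k+3)(1-\epsilon)\rceil$.
\end{itemize}
The vertex counts that genuinely remain are $v=2k$ and $v=2k+1$:
\begin{itemize}
\item At $v=2k$ we combine $e\le\binom{2k}{2}$ with $e\le 2k^2+1-f$. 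The required bound is $k(2k-2)+\epsilon(2k-2)+1$, and this should hold in every range of $\epsilon$, since one of the two bounds wins.
\item At $v=2k+1$ we need $e\le k(2k-1)+\epsilon(2k-1)+1$. The $(k,1-f)$ bound gives this only when $f\ge 2k-\epsilon(2k-1)$, and that fails once $\epsilon$ is large (e.g.\ $\epsilon>3/4$ for large $k$). This is exactly the situation Lemma \ref{brooks-like decomposition} was built for. Its improved bound at $2k+1$ vertices gains one edge, i.e.\ it gives $e\le(2k+1)k-s$. This suffices provided $s+\epsilon(2k-1)\ge 2k$, which is plausible when $s$ is near $f$ and $\epsilon$ is large.
\end{itemize}

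The main obstacle is that we need both bounds from a single partition: the global $(k,1-f)$-sparsity valid for all $v$, and the improved count at $2k+1$ vertices. Moreover, Lemma \ref{brooks-like decomposition} is stated only for $1\le s\le k-1$, whereas $f$ can equal $k+1$ or more. My first attempt would be to rerun the exchange argument of Lemma \ref{brooks-like decomposition}, starting from the partition produced by Theorem \ref{problematic partitioning} (or from Theorem \ref{basic modularity theorem} when $f\le k+1$), with $s=f(k,\epsilon)$. The submodularity step there uses only $\widehat\rho\ge k$ on proper subsets of $U$ and $\widehat\rho\ge s-1$ globally, and gives $\widehat\rho(\widehat G,W)\ge s+1$; this does not depend on $s\le k-1$.

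The place where $s\le k-1$ is used is the pigeonhole choice of $e=xy$. That step needs fewer than about $2k+1$ vertices of $U$ to meet $F[U]$, and $F[U]$ has at most $s-1$ edges. I would replace the count with the observation that $F[U]$ is a forest: so long as $s-1<2k$, some vertex $y\in U$ has all its $F$-neighbours outside $U$, or else we can choose $y$ to be a leaf-side endpoint directly. Failing that, I would restrict the use of the lemma to the range $\epsilon\ge(k+2)/(2k+3)$. There $f=\lceil(2k+3)(1-\epsilon)\rceil\le k+1$, and I would check directly whether $s=\min(f,k-1)$ already satisfies both the large-$v$ inequality and $s+\epsilon(2k-1)\ge 2k$. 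Any leftover small-$k$ or borderline-$\epsilon$ cases I would settle by hand, as in the $k\in\{2,3\}$ cases of Theorem \ref{problematic partitioning}.
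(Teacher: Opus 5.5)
\textbf{There is a genuine gap.} Your architecture matches the paper's: simplicity for $v\le 2m-1$, the $(k,1-f(k,\epsilon))$-sparsity from Theorem~\ref{problematic partitioning} for larger $v$, and Lemma~\ref{brooks-like decomposition} for the one troublesome count, $v=2k+1$ with $\epsilon>3/4$. That last case is the real content of the lemma, and your proposal leaves it unresolved. Both concrete routes you offer break.

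First, there is an off-by-one slip. With $e\le(2k+1)k-s$, the target $e\le m(2k-1)+1$ is equivalent to $s+1+\epsilon(2k-1)\ge 2k$, i.e.\ $s\ge(2k-1)(1-\epsilon)$. Your condition $s+\epsilon(2k-1)\ge 2k$ is the one for the \emph{unimproved} bound. That is why it looks doubtful: it fails for $\epsilon$ near $1$, where $s=f=1$.

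Second, your claim that the exchange argument of Lemma~\ref{brooks-like decomposition} survives for $s=f\ge k$ is false. The submodularity chain only yields $\widehat\rho(\widehat G,W)\ge k$. Upgrading this to $\ge s+1$ is exactly the step $(s-1)-k\le -2$, i.e.\ $s\le k-1$. For $s=k$ the swap can create a new tight $(2k+1)$-set, so the iteration need not terminate. For $s\ge k+1$ it can break $(k,1-s)$-sparsity. Your pigeonhole replacement is also wrong: a forest with $k+1$ edges on $2k+1$ vertices can have no isolated vertex.

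Third, the fallback $s=\min(f,k-1)$ cannot work uniformly on $\epsilon\ge(k+2)/(2k+3)$. For $k=2$ and $\epsilon\in[4/7,5/7)$ it gives $s=1$. The lemma's guarantees then still allow $G'$ to contain $K_6$ minus a perfect matching, which has $12>4m+1$ edges on $6$ vertices.

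The missing observation is that no extension of the lemma is needed, only a check of where $f\le k-1$.
\begin{itemize}
\item At $v=2k+1$ the plain bound needs $f\ge 2k-(2k-1)\epsilon$. Running through the four ranges in the definition of $f$ shows this holds for every $\epsilon\le 3/4$.
\item If $\epsilon\ge(k+4)/(2k+3)$, then $1\le f=\lceil(2k+3)(1-\epsilon)\rceil\le k-1$. So Lemma~\ref{brooks-like decomposition} applies verbatim with $s=f$, via Lemma~\ref{lemma defining f}. Its $G'$ is itself $(k,1-f)$-sparse, so your single-partition worry disappears. The saved edge gives $\rho'\ge(2k+1)\epsilon+f\ge 2k+3-2\epsilon>2m-1$.
\item The remaining window $3/4<\epsilon<(k+4)/(2k+3)$ is empty for $k\ge 4$. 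For $k\in\{2,3\}$ it forces $f=k$, and integrality alone gives $e\le 2k^2+1$. Hence $\rho'\ge k-1+(2k+1)\epsilon\ge 2m-1$, since $\epsilon>3/4>k/(2k-1)$.
\end{itemize}
This three-way split is exactly how the paper's proof handles its Case D.
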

\begin{proof}
Suppose $G$ is $(m,0)$-sparse, where $m = k + \epsilon$, $k$ is integral, $k \geq 2$, and $0 \leq \epsilon < 1$.
By Theorem \ref{problematic partitioning} there exists a partition $G = F \cup G'$ such that $F$ is $(1,-1)$-sparse and $G'$ is $(k, 1-f(k, \epsilon))$-sparse.

We will work with a potential function $\rho'$ over subgraphs $H' \subseteq G'$ with $v = |V(H')|$ and $e = |E(H')|$ defined as $\rho'(H') = (k+\epsilon)v - e$.
Our goal is to show that $\rho'(H') \geq 2m-1$ for all $H'$ with $v \geq 2$.
Let $g(x) = mx - x(x-1)/2$ for $x \in \mathbb{R}$, so $g(t) = \rho'(K_t)$ for integral $t$.
Because $g(2) = g(2m-1) = 2m-1$, we may assume that $v > 2m-1$.

We now break into cases by the value of $\epsilon$.
These cases are not split along the inequalities in the definition of $f$, but instead upon the level sets of $f$.

\textbf{Case A:} $\epsilon < \frac{3}{2k+2}$.
Observe that in this case we have that $f(k, \epsilon) \geq 2k$, so $G'$ is $(k, 2k-1)$-sparse.
We are then done by Remark \ref{from k to m}.

\textbf{Case B:} $\frac{3}{2k+2} < \epsilon < 1/2$.
Recall that $v \geq \lceil 2m-1 \rceil = 2k$.
By Lemma \ref{lemma defining f} we have $f(k, \epsilon) = \lceil (2k+2)(1-\epsilon) \rceil$, so $e \leq k v + 1 - \lceil (2k+2)(1-\epsilon) \rceil$.
Therefore 
\begin{eqnarray*}
  \rho'(H')	& \geq & (k+\epsilon)v - \left( k v +1 - \lceil (2k+2)(1-\epsilon) \rceil  \right) \\
  		& = & 2k+1 + \epsilon v - \lfloor (2k+2)\epsilon \rfloor \\
  		& \geq & 2k + 1 + (2k)\epsilon -  (2k+2)\epsilon  \\
  		& = & 2k + 1 - 2 \epsilon > 2k + 2\epsilon -1 = 2m-1.
\end{eqnarray*} 

This concludes Case B.
In the remaining cases we have $\epsilon \geq 1/2$, and so we may assume $v > 2m-1 \geq 2k$, so $v \geq 2k+1$.

\textbf{Case C:} $1/2 \leq \epsilon < (k+3)/(2k+3)$.
Observe that in this case we have that $f(k, \epsilon) = k+1$, so $e \leq k v - k$.
Therefore 
$$ \rho'(H') \geq \epsilon v + k \geq \epsilon(2k+1) + k \geq 2k + 1/2.$$ 
Because $k \geq 2$, we have that $\epsilon < (k+3)/(2k+3) \leq 5/7$, so $2k + 1/2 \geq 2k + 2\epsilon - 1 = 2m-1$.

\textbf{Case D:} $(k+3)/(2k+3) \leq \epsilon$.
By Lemma \ref{lemma defining f} we have $f(k, \epsilon) = \lceil (2k+3)(1-\epsilon) \rceil$, so $e \leq k v + 1 - \lceil (2k+3)(1-\epsilon) \rceil$.
If $v \geq 2k+2$, then 
\begin{eqnarray*}
  \rho'(H')	& \geq & (k+\epsilon)v - \left( k v + 1 - \lceil (2k+3)(1-\epsilon) \rceil  \right) \\
  		& = & 2k+2 + \epsilon v - \lfloor (2k+3)\epsilon \rfloor \\
  		& \geq & 2k + 2 + (2k+2)\epsilon -  (2k+3)\epsilon .  \\
  		& = & 2k + 2 - \epsilon > 2k + 2\epsilon - 1 = 2m-1.
\end{eqnarray*} 

So we may assume that $v = 2k+1$.
By repeating the above calculation (while reducing our bound on $v$ by $1$), we get that $\rho'(H') \geq 2k + 2 - 2\epsilon$.
We now proceed by considering several cases.

\textbf{Subcase D.1:} $\epsilon \leq 3/4$.
In this subcase we have $\rho'(H') \geq 2k + 2\epsilon -1 = 2m-1$, and we are done.

\textbf{Subcase D.2:} $\epsilon \geq \frac{k+4}{2k+3}$.
Observe that in this case we have that $f(k, \epsilon) \leq k-1$.
We now choose our partition $G = F \cup G'$ to be the partition provided by Lemma \ref{brooks-like decomposition}.
Specifically, this means that subgraphs on exactly $2k+1$ vertices have one fewer edges than provided by sparsity.
So 
$$ \rho'(H') \geq (2k + 2 - 2\epsilon) + 1 > 2k + 2\epsilon - 1 = 2m-1,$$
and we are done.

\textbf{Subcase D.3:} $3/4 < \epsilon <  \frac{k+4}{2k+3}$.
Observe that $\frac{k+4}{2k+3} < 3/4$ when $k \geq 4$, so we may assume $k \in \{2,3\}$.
Specifically, we have that 
\begin{itemize}
	\item $k=2$, $\epsilon \in (3/4, 6/7)$, which implies $f(k, \epsilon) = 2$, or
	\item $k=3$, $\epsilon \in (3/4, 7/9)$, which implies $f(k, \epsilon) = 3$.  
\end{itemize}

So if $k=2$, then $v = 2k+1 = 5$, $e = kv - f(k,\epsilon)+1 = 9$, and $2m-1 = 3 + 2\epsilon$.
So we have that (recall $\epsilon > 3/4$)
$$ \rho'(H') = (2+\epsilon)5 - 9 = 1 + 5 \epsilon > 3 + 2\epsilon = 2m-1.$$

And if $k=3$, then $v = 7$, $e = 19$, and $2m-1 = 5 + 2\epsilon$.
So we have that 
$$ \rho'(H') = (3 + \epsilon)7 - 19 = 2 + 7 \epsilon > 5 + 2 \epsilon = 2m-1.$$
\end{proof}

\bibliographystyle{alpha}
\bibliography{sparseForests}

\end{document}